\theoremstyle{plain}							
\newtheorem{Lem}{Lemma}[section]
\newtheorem{Thm}[Lem]{Theorem}
\newtheorem{Prop}[Lem]{Proposition}
\theoremstyle{definition}
\newtheorem*{Def}{Definition}
\newcommand{\set}{{\rm{SET}^\circledR}}
\begin{document}

\title{Partitions of $AG(4,3)$ into Maximal Caps}
\author{{Michael Follett}
\and{Kyle Kalail}
\and{Elizabeth McMahon}
\and{Catherine Pelland}
\and{Robert Won}}
\address{Lafayette College\\
   Easton, PA  18042\\ {\tt mcmahone@lafayette.edu}}
 \thanks{Research  supported by NSF grant DMS-055282. The first author also had support from a Lafayette College EXCEL program grant}

\date{}

 \keywords{Finite affine geometry, maximal caps, affine transformations}

\begin{abstract}
In a geometry, a maximal cap is a collection of points of largest size containing no lines.
In $AG(4,3)$, maximal caps contain 20 points.  The 81 points of $AG(4,3)$ can be partitioned into 4 mutually disjoint maximal caps together with a single point $P$, where every pair of points that makes a line with $P$ lies entirely inside one of those  caps.  The caps in a partition can be paired up so that both pairs are either in exactly one partition or they are both in two different partitions.  This difference determines the two equivalence classes of partitions of $AG(4,3)$  under the action by affine transformations. 
\end{abstract}
\maketitle

\section{Introduction}

A {\em $k$-cap} (or briefly a {\em cap}) in $AG(n,3)$ is a set of $k$ points containing no 3 points on a line; a {\em maximal cap} is a cap of largest possible size.  A cap is called {\em complete} if it is not a subset of a larger cap.  There are caps in $AG(n,3), \, n \ge 3$, which are complete but smaller than a maximal cap.

The elements of $AG(n,3)$ can be written as $n$-tuples with coordinates in $\mathbb{Z}_3$, so the full transformation group of $AG(n,3)$  is the affine group $\mathit{Aff}(n,3) = GL(n,3) \ltimes \mathbb{Z}_3^n$, where  $(A,\vec{b})$ represents the transformation $\vec{v}\mapsto A\vec{v}+\vec{b}$.  Alternatively, a transformation permutes the points of $AG(n,3)$ by mapping  $n+1$ affinely independent set of points to any $n+1$ affinely independent points.  
The applet Swingset, developed by Coleman, Hartshorn, Long and Mills \cite{CH} provides a nice way to visualize these affine transformations using the card game $\set$ \cite{Set}.  Caps are invariant under the action of $\mathit{Aff}(n,3)$.

The maximal caps in the affine geometry $AG(4,3)$ were first enumerated in 1970, in a paper (written in Italian) by G. Pellegrino \cite{P}.  In 1983, R. Hill \cite{H} proved that all maximal caps are affinely equivalent. Aided by the visualization provided by $\set$,  a rich geometric structure to these caps has been discovered.  A. Forbes \cite{F} found that the 81 points in $AG(4,3)$ can be partitioned into 4 mutually disjoint maximal caps with a single point $\vec{a}$ left.  G. Gordon \cite{Gar} realized that any pair of points that make a line with $\vec{a}$ (there are 40 such pairs) lie in one of the caps in the partition.  
It is the goal of this paper to explore more of the structure of these partitions.  

It is well-understood that the symmetry group of $AG(n,3)$ acts transitively on maximal caps; here, we ask whether that action is 2-transitive on disjoint caps.  Further, does the symmetry group act transitively on partitions of the affine geometry into maximal caps?  We will look at partitions of $AG(n,3)$, for $n=2,3$ and 4; we show in Section \ref{S:2and3} that  the action of the symmetry group is transitive on partitions for $AG(2,3)$ and $AG(3,3)$.  In Section \ref{S:dim4}, we show  that the partitions in $AG(4,3)$ are in two affine equivalence classes in $AG(4,3)$ and isolate the fundamental difference between those classes.  


Finally, $\mathit{Aff}(4,3)$ is of order 1,965,150,720.  In Section \ref{S:groups}, we briefly examine various subgroups of this group that fix particular caps and partitions as sets.


\section{Caps in $AG(n,3)$, with a focus on $n < 4$}\label{S:2and3}

Table \ref{T:capsizes} enumerates the known sizes of maximal caps in $AG(n,3)$ for $n \leq 6$, the only sizes known at this time.  

\begin{table}[h]
\begin{center}
\begin{tabular}{|c|c|c|c|c|c|} \hline
AG$(1,3)$ & AG$(2,3)$ & AG$(3,3)$ & AG$(4,3)$ & AG$(5,3)$ & AG$(6,3)$  \\ \hline
2 & 4 & 9 & 20 & 45 & 112 \\ \hline
\end{tabular}
\end{center}
\caption{All known sizes of maximal caps in $AG(n,3)$}
\label{T:capsizes}
\end{table}%

The sizes for maximal caps in dimensions 1 through 3 can be found by inspection.  
In 1970, Pellegrino provided the first proof that there are 20 points in a maximal cap in $AG(4,3)$ \cite{P}.  Edel, Ferret, Landjev and Storme first  classified the maximal caps in $AG(5,3)$ in 2002 \cite{dim5}.  The results in dimensions 4 and 5 came from looking  at  caps in the projective space $PG(n,3)$, and removing points.   In 2008, A. Potechin found the size of the maximal caps in $AG(6,3)$ \cite{dim6}; these results came from looking at caps in $AG(5,3)$ and analyzing how those can extend to the higher dimension. It is still not known how large maximal caps are in dimensions larger than 6.  In all dimensions where the sizes of maximal caps are known, all maximal caps are affinely equivalent.  Hill first showed this for $AG(4,3)$  \cite{H} in 1983; this result has been extended to all dimensions where the sizes of maximal caps are known in the papers that first identified the maximal caps.

The points in $AG(n,3)$ can be realized as $n$-tuples of elements of $\mathbb{F}_3$.  In this case, as  Davis and Maclagan \cite{DM} point out, lines are easy to identify: Three points in $AG(n,3)$ are collinear if and only if their sum is $\vec{0} \bmod 3$.
%

In dimensions 2 and 4, as we will see, the maximal caps consist of pairs of points from a pencil of lines through a fixed point; we call that point the {\em anchor point.}  In dimensions 3 and 5, maximal caps sum to $\vec{0} \bmod 3$.  In dimension 3 and lower, we can find results about caps simply by inspection.  In this paper, we extend this direct analysis to dimension 4.    We begin in dimension 2.  

To aid in the visualization, we will use the same scheme as is used by Davis and Maclagan \cite{DM}: $AG(2,3)$ is represented by a $3\times3$ grid, as pictured below in Figure \ref{F:AG23}.   There are 12 lines in $AG(2,3)$: 3 horizontal, 3 vertical, 3 diagonals as pictured, and the 3 diagonals in the opposite direction.

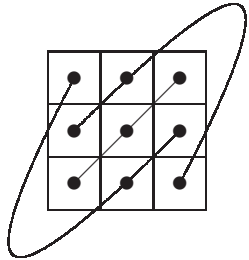
\begin{figure}[h]
\begin{picture}(100,80)(60,0)		%
\multiput(80,10)(0,20){4}{\line(1,0){60}}	
\multiput(80,10)(20,0){4}{\line(0,1){60}}	
\multiput (90,20)(20,0){3}{\circle* {5}}	
\multiput (90,40)(20,0){3}{\circle* {5}}	
\multiput (90,60)(20,0){3}{\circle* {5}}	
\qbezier[1000](90,40)(195,145)(130,20)
\qbezier[1000](130,40)(25,-65)(90,60)
\put(90,20){\line(1,1){40}}
\end{picture}
\begin{center}
\caption{$AG(2,3)$ with one set of diagonal lines shown.}
\label{F:AG23}
\end{center}
\vspace{-.15in}
\end{figure}

\begin{Prop} In $AG(2,3)$, a maximal cap has 4 points and consists of two lines through an anchor point, with the anchor removed; all maximal caps are affinely equivalent.   $AG(2,3)$ can be partitioned into two disjoint caps together with their common anchor point.  Any two partitions are affinely equivalent.
\end{Prop}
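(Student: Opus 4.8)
The plan is to prove the four assertions in turn, using two facts from the introduction: three points of $AG(n,3)$ are collinear iff they sum to $\vec 0$, and $\mathit{Aff}(2,3)$ is transitive on affinely independent triples.

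\emph{Size and structure.} First I would bound the cap size. If $C$ were a cap with $|C|=5$, then within each of the four parallel classes of lines the five points of $C$ would be distributed $2,2,1$ among the three lines of that class (the only way to write $5$ as an ordered sum of three numbers each $\le 2$), contributing $\binom22+\binom22+\binom12 = 2$ collinear pairs; summing over the four classes gives $8$ collinear pairs, yet all $\binom52=10$ pairs of $C$ are collinear in $AG(2,3)$ --- a contradiction. So $|C|\le 4$, and exhibiting any $4$-point cap shows maximal caps have exactly $4$ points. Given a $4$-cap $C$, its $6$ pairs lie on $6$ \emph{distinct} lines (no line meets $C$ in $3$ points), each line carrying a unique third point outside $C$; since only $5$ points lie outside $C$, two of these lines share a third point $a$. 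Those two lines meet only in $a\notin C$, so their four points of $C$ exhaust $C$; hence $C\cup\{a\}$ is the union of two lines through $a$, i.e.\ $C$ is two lines through the anchor $a$ with $a$ removed. Adding the collinearity relations $a+c_1+c_2=\vec 0$ and $a+c_3+c_4=\vec 0$ for the two lines gives $a=\sum_{c\in C}c$, so $a$ is determined by $C$, hence unique.

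\emph{Affine equivalence of caps, and existence of a partition.} Given maximal caps $C=\ell_1\cup\ell_2\setminus\{a\}$ and $C'=\ell_1'\cup\ell_2'\setminus\{a'\}$, I would pick $c_1\in\ell_1\setminus\{a\}$, $c_2\in\ell_2\setminus\{a\}$, and likewise $c_1',c_2'$; since $\{a,c_1,c_2\}$ and $\{a',c_1',c_2'\}$ are affinely independent, an affine map carrying one triple to the other carries $\ell_1\to\ell_1'$ and $\ell_2\to\ell_2'$, hence $C\to C'$. For existence of a partition, fix a point $a$, pair the four lines through $a$ as $\{\ell_1,\ell_2\},\{\ell_3,\ell_4\}$, and set $C_1=\ell_1\cup\ell_2\setminus\{a\}$, $C_2=\ell_3\cup\ell_4\setminus\{a\}$; these are disjoint, cover $AG(2,3)\setminus\{a\}$, and each is a $4$-cap (a line meeting $C_i$ in $3$ points would contain $2$ points of one of its two defining lines, hence be that line, hence contain $a\notin C_i$).

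\emph{All partitions are affinely equivalent.} The step I expect to need the most care is showing that \emph{every} partition $AG(2,3)=C_1\sqcup C_2\sqcup\{a\}$ has the "common anchor" form, i.e.\ both caps are anchored at $a$. Let $a_i=\sum_{c\in C_i}c$ be the anchor of $C_i$; since the nine points of $AG(2,3)$ sum to $\vec 0$, we get $a_1+a_2+a=\vec 0$. If $a_1\ne a$, then $a_1\notin C_1$ forces $a_1\in C_2$, and the line through $a_1$ and $a_2$ has third point $-(a_1+a_2)=a$; but this line is one of the two lines composing $C_2\cup\{a_2\}$, so its non-$a_2$ points, including $a$, lie in $C_2$, contradicting $a\notin C_2$. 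Hence $a_1=a$, and then $a_2=-2a=a$ as well. Finally, to compare two partitions written in common-anchor form at $a$ and $a'$, choose $c_1\in\ell_1\setminus\{a\}$, $c_2\in\ell_2\setminus\{a\}$ (from the two lines forming $C_1$) and corresponding $c_1',c_2'$ for $C_1'$, and take the affine map sending $(a,c_1,c_2)\mapsto(a',c_1',c_2')$. It bijects the four lines through $a$ with the four through $a'$, sending $\ell_1,\ell_2$ to $\ell_1',\ell_2'$ and hence $\{\ell_3,\ell_4\}$ to $\{\ell_3',\ell_4'\}$; therefore it carries $C_1\to C_1'$, $C_2\to C_2'$, $a\to a'$, i.e.\ one partition to the other.
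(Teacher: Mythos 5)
Your proof is correct, and it is worth noting where it diverges from the paper's. The paper's proof is deliberately terse: the size and two-lines-through-an-anchor structure of maximal caps are ``determined by inspection,'' the anchor is identified as the sum of the cap's coordinates, a partition is obtained by pairing the four lines through a point, and equivalence follows because an affine map is determined by the images of the anchor and one point from each line. Your argument shares that skeleton (anchor $=\sum_{c\in C}c$; affine maps determined by an affinely independent triple) but replaces the appeals to inspection with genuine counting: the bound $|C|\le 4$ via distributing points over the four parallel classes, and the two-lines structure via pigeonhole on the six secant lines versus the five external points. You also prove a step the paper passes over silently, namely that in an \emph{arbitrary} partition $C_1\sqcup C_2\sqcup\{a\}$ the leftover point is forced to equal the anchor of both caps (using that the nine points sum to $\vec 0$, so $a_1+a_2+a=\vec 0$); the paper only constructs partitions that already have this form and asserts that the partition is determined by either of its caps. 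Your version is longer but self-contained and generalizes more readily (the parallel-class count and the sum-to-zero argument are coordinate-free), while the paper's buys brevity by leaning on the smallness of $AG(2,3)$.
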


\begin{figure}[htbp]
\begin{picture}(80,65)(80,0)		%
\multiput(80,10)(0,20){4}{\line(1,0){60}}	
\multiput(80,10)(20,0){4}{\line(0,1){60}}	
\put(90,60){\circle*{2}}				
\multiput (110,60)(20,0){2}{\circle* {10}}	
\multiput (110,40)(20,-20){2}{\circle {10}}	
\multiput (90,20)(0,20){2}{\circle* {5}}	
\multiput (130,40)(-20,-20){2}{\circle {5}}		
\end{picture}
\vspace{-.2in}
\begin{center}
\caption{A partition of $AG(2,3)$ into 4 caps with anchor point in the upper left.}
\label{F:Dim2ptn}
\end{center}
\vspace{-.2in}
\end{figure}
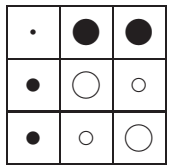 

\begin{proof}
The structure of the maximal caps and their affine equivalence can easily be determined by inspection.  The 4 lines through the point  $\vec{0}$ (in the upper left) are shown  in Figure \ref{F:Dim2ptn} as a pair of points of the same size and shading.   Any two of these pairs will form a maximal cap.  The anchor is uniquely determined by the cap, since the sum of the coordinates for the points in a cap gives the coordinates for the anchor.  Thus, the remaining 4 points must be a cap with the same anchor.  An affine transformation is determined by the images of the anchor,  one large black point and one small black point, so any two caps are equivalent; since the partition is determined by a cap, so all partitions are affinely equivalent as well.
\end{proof}

\medskip
We will represent $AG(3,3)$  by three $3\times3$ grids. Two examples of lines are shown in Figure \ref{F:AG33}.  A line will either consist of three points in one subgrid of $AG(3,3)$ in the same position as a line from $AG(2,3)$,  or three points with one in each of the three subgrids such that, if you superimpose the three subgrids, the points are either in the same position (the open circles in Figure \ref{F:AG33}) or are in the position as a line in $AG(2,3)$ (the solid dots in Figure \ref{F:AG33}).

\begin{figure}[h]
\begin{picture}(125,50)(67.5,7.5)		%
\multiput(60,7.5)(0,15){4}{\line(1,0){45}}	
\multiput(60,7.5)(15,0){4}{\line(0,1){45}}	
\multiput(107,7.5)(0,15){4}{\line(1,0){45}}	
\multiput(107,7.5)(15,0){4}{\line(0,1){45}}	
\multiput(154,7.5)(0,15){4}{\line(1,0){45}}	
\multiput(154,7.5)(15,0){4}{\line(0,1){45}}	
\put (67.5,30){\circle* {6}}
\put (129.5,45){\circle* {6}}
\put (191.5,15){\circle* {6}}
\put (97.5,45){\circle {10}}
\put (144.5,45){\circle {10}}
\put (191.5,45){\circle {10}}
\end{picture}
\vspace{-.1in}
\begin{center}
\caption{$AG(3,3)$ with two sets of collinear points shown.}
\label{F:AG33}
\end{center}
\end{figure}
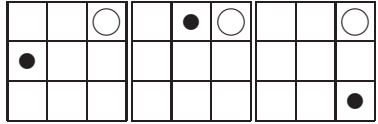

In $AG(3,3)$, the caps sum to $\vec{0}$, so there is no anchor.  $AG(3,3)$ can be partitioned into 3 disjoint maximal caps.  
It is interesting to note that all maximal caps in $AG(5,3)$ have 45 points, which also sum to $\vec{0}$; could it always be true that caps sum to $\vec{0} \bmod 3$ in odd dimensions?  However, $AG(5,3)$ cannot have a similar decomposition into disjoint caps, as 45 does not divide 243.

\begin{Prop} $(1)$ In $AG(3,3)$, all maximal caps are affinely equivalent; the coordinates for a maximal cap sum to $\vec{0} \bmod 3$.  

$(2)$  $AG(3,3)$ can be partitioned into three mutually disjoint maximal caps.  Every maximal cap in $AG(3,3)$ is in a unique partition of $AG(3,3)$; thus, all partitions are equivalent.
\end{Prop}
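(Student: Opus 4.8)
The plan is to reduce everything to one explicit maximal cap together with the fact --- classical, and recorded in the introduction for every dimension in which the maximal cap size is known --- that all maximal caps in $AG(3,3)$ are affinely equivalent. For the concrete model I would take $C_0 = \{(x,y,\,x^2+y^2): x,y \in \mathbb{F}_3\}$. First I would verify $C_0$ is a cap via the Davis--Maclagan criterion: if three distinct points have all three coordinate sums equal to $0$, then in each of the first two coordinates the three entries are either constant or a permutation of $\{0,1,2\}$, and a short check using $0^2=0$, $1^2=2^2=1$, $1+1+1=0$ in $\mathbb{F}_3$ shows the third coordinates cannot then also sum to $0$; so $C_0$ is line-free, and since $|C_0|=9$ it is maximal by Table~\ref{T:capsizes}. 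A direct summation gives $\sum_{p\in C_0} p = \vec 0$. For an arbitrary maximal cap $C = AC_0 + \vec b$ we then get $\sum_{p \in C} p = A(\sum_{q\in C_0} q) + 9\vec b = \vec 0$, since $9 \equiv 0 \pmod 3$; this proves $(1)$.

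For $(2)$, existence of a partition comes from a secant-free direction. Two distinct points of $C_0$ never differ by a multiple of $d=(0,0,1)$, so no line in direction $d$ is a secant of $C_0$; hence $C_0$, $C_0+d$, $C_0+2d$ are pairwise disjoint (a common point of $C_0$ and $C_0+td$ would force a secant in direction $d$), and as translates of a cap they are maximal caps. Since $3\cdot 9 = 27$ they partition $AG(3,3)$, and applying an affine equivalence shows every maximal cap lies in at least one partition.

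The work is in uniqueness. I would first note that $d$ is the \emph{only} secant-free direction of $C_0$: the difference set $C_0 - C_0$ consists of $\vec 0$ together with every vector whose first two coordinates are not both zero, so every $1$-space other than $\langle d\rangle$ is a secant direction. Next, a counting argument shows that any partition $\{C_0,D_1,D_2\}$ has exactly $9$ rainbow lines (one point in each part): each of the $117$ lines is a secant of at most one part, and the three parts contribute $3\binom{9}{2}=108$ secant lines. The key step --- which I expect to be the main obstacle --- is to show these $9$ rainbow lines form a parallel class; my approach would be to rule out two rainbow lines meeting, by spanning them into a plane and checking that the three parts cannot partition the $9$ points of that $AG(2,3)$ into caps of the forced sizes (using the structure of $4$-caps in $AG(2,3)$ from the preceding proposition). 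Granting this, each part is a transversal of the parallel class, in some direction $e$; then $C_0$ is a transversal of the direction-$e$ fibers, so $e$ is secant-free for $C_0$ and $e=d$. Finally $D_1,D_2$ are the two remaining ``sections'' $\{(x,y,\,x^2+y^2+h(x,y))\}$ with $h:\mathbb{F}_3^2\to\{1,2\}$ over the $(x,y)$-plane, and since a section is a cap precisely when its third coordinates sum to a nonzero value along every line of $AG(2,3)$, a short grid check forces $h$ constant, so $\{D_1,D_2\}=\{C_0+d,\,C_0+2d\}$. Hence $C_0$ lies in a unique partition; by affine equivalence so does every maximal cap, and all partitions are then images under $\mathit{Aff}(3,3)$ of the one through $C_0$, hence affinely equivalent. (The other point where a fully self-contained argument would require care is the affine equivalence of all $9$-caps itself: pigeonhole does not produce a secant-free direction for an arbitrary $9$-cap, since $36$ secants spread over $13$ directions need not leave one empty.)
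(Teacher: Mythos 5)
Your proposal contains two genuine gaps, one of which you flag yourself. First, the affine equivalence of all maximal caps in $AG(3,3)$ is part of the statement to be proved, not a black box you can import: the paper explicitly remarks that it could not locate a proof in the literature and supplies one (showing every point outside a $9$-cap completes exactly two secants, then building the transformation point by point from $4$ affinely independent images). Your parenthetical at the end concedes that you have no self-contained argument here, and since both the sum-to-$\vec{0}$ claim in $(1)$ and the ``all partitions are equivalent'' claim in $(2)$ are derived from this equivalence, the omission propagates through everything. Second, the uniqueness argument in $(2)$ hinges on the assertion that the $9$ rainbow lines of a partition form a parallel class, and this is exactly the step you leave as ``granting this.'' Your proposed route---two intersecting rainbow lines span a plane whose induced partition is impossible---only closes cleanly in the $(1,4,4)$ case, where the anchor structure of complementary $4$-caps forbids a rainbow line through the anchor; the $(2,3,4)$ and $(3,3,3)$ intersection patterns are not ruled out by the structure of $4$-caps alone (a plane certainly can be partitioned into three $3$-caps), so real additional work is needed. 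Note also that even pairwise disjointness of $9$ lines covering $AG(3,3)$ would not by itself force a parallel class, so the step is not cosmetic.

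That said, the parts you do carry out are correct and take a genuinely different route from the paper. Your quadric cap $\{(x,y,x^2+y^2)\}$, the secant-free direction $(0,0,1)$ giving existence of the partition by translation, the count $117-108=9$ of rainbow lines, and the final ``section'' computation forcing $h$ constant are all sound. The paper instead proves uniqueness by intersecting a maximal cap with three parallel planes: the intersection sizes must be $(4,4,1)$ or $(3,3,3)$ (the pattern $(4,3,2)$ is excluded because the cap's coordinates sum to $\vec{0}$), and for the explicit cap the $(4,4,1)$ pattern plus the anchor condition in each plane pins down the other two caps completely. If you want to salvage your approach, the paper's hyperplane-intersection constraint is probably also the cleanest tool for eliminating the $(2,3,4)$ and $(3,3,3)$ cases in your plane argument; alternatively you could simply adopt the paper's uniqueness argument wholesale and keep your translation construction for existence.
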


\begin{proof}
$(1)$ An example of a maximal cap in $AG(3,3)$ is pictured in Figure \ref{F:9-cap}; label these points  $\vec{b_1},\dots,\vec{b_9}$.  The reader can verify that there are no lines in the set of points and that the sum of coordinates is $\vec{0} \bmod 3$.  We were unable to find a proof in the literature that all maximal caps in $AG(3,3)$ are affinely equivalent, so we include a proof here for completeness. 

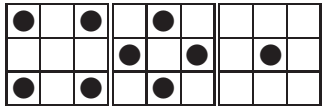
\begin{figure}[h]
\begin{picture}(100,30)(67.5,15) \setlength{\unitlength}{.03cm}		%
\multiput(59.8,7.5)(0,15){4}{\line(1,0){45}}	
\multiput(59.8,7.5)(15,0){4}{\line(0,1){45}}	
\multiput(107,7.5)(0,15){4}{\line(1,0){45}}	
\multiput(107,7.5)(15,0){4}{\line(0,1){45}}	
\multiput(154,7.5)(0,15){4}{\line(1,0){45}}	
\multiput(154,7.5)(15,0){4}{\line(0,1){45}}	
\multiput (67.5,15)(0,30){2}{\circle* {9}}
\multiput (97.5,15)(0,30){2}{\circle* {9}}
\multiput (129.5,15)(0,30){2}{\circle* {9}}
\multiput (114.5,30)(30,0){2}{\circle* {9}}
\put (177,30){\circle* {9}}
\end{picture}
\begin{center}
\caption{A maximal 9-cap in $AG(3,3)$.}
\label{F:9-cap}
\end{center}
\vspace{-.1in}
\end{figure}

First, any point in $AG(3,3)$ that is not in the cap must complete a line from points in the cap, but in fact, every such point completes exactly two distinct lines.  To show this, assume that a point $c$ completes 3 lines.  We can find an affine transformation that sends $c$ to the open dot as shown in Figure \ref{F:pf} and the three pairs of points in a line with $c$ to the large dark points as shown in that figure.  We can now add at most two points to the second subgrid without completing a line; two such points are shown as small dark circles. The cap is now complete and contains only 8 points, a contradiction.  

Thus, each point completes at most 2 lines.  Since there are 18 points not in the maximal cap, and 36 pairs of points in the cap, each point not in the cap completes exactly 2 lines.  

\begin{figure}[h]
\begin{picture}(125,20)(35,18)  \setlength{\unitlength}{.025cm}		%
\multiput(59.8,7.5)(0,15){4}{\line(1,0){45}}	
\multiput(59.8,7.5)(15,0){4}{\line(0,1){45}}	
\multiput(107,7.5)(0,15){4}{\line(1,0){45}}	
\multiput(107,7.5)(15,0){4}{\line(0,1){45}}	
\multiput(154,7.5)(0,15){4}{\line(1,0){45}}	
\multiput(154,7.5)(15,0){4}{\line(0,1){45}}	
\put (67.5,45){\circle{9}}
\multiput (67.5,15)(0,15){2}{\circle* {9}}
\multiput (82.5,45)(15,0){2}{\circle* {9}}
\multiput (114.5,45)(47,0){2}{\circle* {9}}
\multiput (130,15)(0,15){2}{\circle* {5}}
\end{picture}
\begin{center}
\caption{The solid points comprise a complete, non-maximal cap in $AG(3,3)$.}
\label{F:pf}
\end{center}
\vspace{-.1in}
\end{figure}
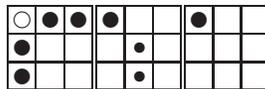

We now show that any maximal cap $C=\{\vec{a_1},\dots,\vec{a_9}\}$  in $AG(3,3)$ is affinely equivalent to the cap in Figure  \ref{F:9-cap}.  We can determine an affine transformation by the images of 4 affinely independent points.  Choose a point $c_1$ not in the cap, and map it to the point in the center of the left-most grid in Figure \ref{F:9-cap}.  Next, map $a_1$ to the point in the upper left corner of that grid.  The image of the point in $C$ that makes a line with $c_1$ and $a_1$, without loss of generality $a_2$, is now mapped to the point in the lower right corner of the same grid.  Once $a_3$ is mapped to the point in the upper right of the left-most grid, the images of all points that are affinely dependent on $c_1$, $a_1$ and $a_3$ are now determined; the only other point in $C$ that is mapped to a point in the left grid is the point that makes a line with $c$ and $a_3$, without loss of generality $a_4$.  Finally, let $c_2$ be any  point not in $C$ and not affinely dependent on $c_1$, $a_1$ and $a_3$, and map that point to the center of the center grid.  The images of all remaining points in $AG(3,3)$ are determined.  There are four points in $C$ that complete two lines with $c_2$; since no other points in $C$ can have images in the left grid, those points must be in the center grid.  It is straightforward to verify that the only points in the center grid that can be images of the points in $C$ are the ones pictured and that this requires the single point in the right grid to be the image of the last point in $C$.  

Now, since any two maximal caps in $AG(3,3)$ are affinely equivalent and summing to zero will be preserved by affine transformations,  the sum of the points $\vec{a_1},\dots,\vec{a_9}$ is also $\vec{0}$ mod 3.

$(2)$ Figure \ref{F:3,3-decomp} shows a decomposition of $AG(3,3)$ into 3 disjoint maximal caps.  Notice that one of the caps is $\vec{b_1},\dots,\vec{b_9}$.  Since all 9-caps of $AG(3,3)$ are affinely equivalent, it suffices to show that this partition is the only partition containing $\vec{b_1},\dots,\vec{b_9}$. 

\begin{figure}[h]
\begin{picture}(125,30)(67.5,20)		%
\multiput(59.8,7.5)(0,15){4}{\line(1,0){45}}	
\multiput(59.8,7.5)(15,0){4}{\line(0,1){45}}	
\multiput(107,7.5)(0,15){4}{\line(1,0){45}}	
\multiput(107,7.5)(15,0){4}{\line(0,1){45}}	
\multiput(154,7.5)(0,15){4}{\line(1,0){45}}	
\multiput(154,7.5)(15,0){4}{\line(0,1){45}}	
\multiput (67.5,15)(0,30){2}{\circle* {9}}
\multiput (97.5,15)(0,30){2}{\circle* {9}}
\multiput (129.5,15)(0,30){2}{\circle* {9}}
\multiput (114.5,30)(30,0){2}{\circle* {9}}
\put (176.5,30){\circle* {9}}
\multiput (82.5,15)(0,30){2}{\circle* {4}}
\multiput (161.5,15)(0,30){2}{\circle* {4}}
\multiput (191.5,15)(0,30){2}{\circle* {4}}
\multiput (68.5,30)(30,0){2}{\circle* {4}}
\put (129.5,30){\circle* {4}}
\multiput (114.5,15)(0,30){2}{\circle {7}}
\multiput (144.5,15)(0,30){2}{\circle {7}}
\multiput (176.5,15)(0,30){2}{\circle {7}}
\multiput (161.5,30)(30,0){2}{\circle {7}}
\put (82.5,30){\circle{7}}
\end{picture}
\begin{center}
\caption{$AG(3,3)$ partitioned into 3 disjoint maximal caps.}
\label{F:3,3-decomp}
\end{center}
\vspace{-.2in}
\end{figure}
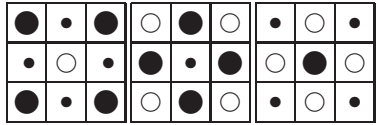

Given any maximal cap $C$ and and 3 parallel planes of $AG(3,3)$,  $C$ must intersect those 3 planes in sets of size 4, 4 and 1 or 3, 3 and 3.  (The only other possibility would be for $C$ to intersect those planes in 4, 3 and 2 points, since a cap cannot have more than 4 points in a plane.  However, we can find an affine transformation so that  each plane corresponds to one coordinate of the vectors; if the cap intersects those 3 planes in 4, 3 and 2 points, the sum of the cap for that coordinate cannot be 0 mod 3.)  Thus, given $C = \{\vec{b_1},\dots,\vec{b_9}\}$, for any partition containing $C$, the other two caps must intersect the three planes in 1 or 4 points.  In the first two planes, if another cap has a 4-point intersection with the plane, then viewing the plane as $AG(2,3)$, the anchor for that cap must be the same as the anchor for the 4 points of $C$, so the only possibility for the first 2 planes is what is shown.  Thus, the other two caps comprising the partition are completely determined.
\end{proof}


\section{Disjoint and intersecting maximal caps of $AG(4,3)$}\label{S:dim4}

We will represent $AG(4,3)$  by a $9\times9$ grid, which we can view as three copies of $AG(3,3)$ or nine copies of $AG(2,3)$ (arranged as $AG(2,3)$).  A line will consist of three points that appear either in the same $3\times3$ subgrid (as a line in $AG(2,3)$), or in three subgrids that correspond to a line in $AG(2,3)$ so that, when the subgrids are superimposed, the points are either in the same position or they are a line in $AG(2,3)$. When coordinatizing $AG(4,3)$, we can have the first two coordinates give the $AG(2,3)$ subgrid, and the second two give the point within that subgrid.  As indicated in Table \ref{T:capsizes}, a maximal cap in $AG(4,3)$ contains 20 points in 10 pairs, where each pair completes a line with the anchor point;  one such cap $S$ is shown in Figure \ref{F:cap}. All maximal caps are affinely equivalent (Pellegrino  \cite{P} and Hill  \cite{H}).  Considering the points as 4-tuples, $S$ is the  first cap lexicographically with $\vec{0}$ as its anchor point.

\begin{figure}[htbp]
\begin{center}
\includegraphics[width=1.3in]{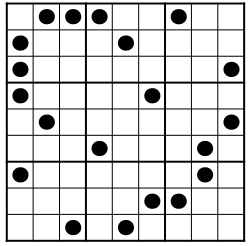}
\caption{A maximal cap $S$ in $AG(4,3)$; the anchor point is in the upper left.}
\label{F:cap}
\end{center}
\vspace{-.1in}
\end{figure} 

\begin{Lem}
For any maximal cap $S$ in $AG(4,3)$, there exists an anchor point $\vec{a}$,  so that the cap consists of 10 pairs of points,  each of which forms a line with $\vec{a}$.  The sum of the coordinates of the points in $S$ is $-\vec{a}  \bmod 3$, so the anchor point is unique.
\end{Lem}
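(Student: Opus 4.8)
The plan is to deduce the statement from the known affine equivalence of all maximal caps in $AG(4,3)$ (Pellegrino \cite{P}, Hill \cite{H}) together with a short computation, leaving only a routine coordinate check on one explicit cap.

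First I would verify the claim for the particular cap $S$ of Figure \ref{F:cap}, which is set up to have $\vec 0$ as an anchor. The concrete way to do this is to list the $20$ points of $S$ as $4$-tuples over $\mathbb{F}_3$ and exhibit the partition into $10$ pairs $\{p_i,q_i\}$ with $p_i+q_i\equiv\vec 0\bmod 3$; by the collinearity criterion (three points are collinear iff their sum is $\vec 0\bmod 3$) each such pair then forms a line with $\vec 0$. Equivalently, the $80$ points other than $\vec 0$ split into the $40$ pairs $\{\vec v,-\vec v\}$, one per line through $\vec 0$, and it suffices to check that $S$ meets each of these lines in $0$ or $2$ points (no tangent line through $\vec 0$); since $|S|=20$ this forces $S$ to be a union of exactly $10$ such pairs.

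Next I would transfer this to an arbitrary maximal cap $S'$. By the classification there is $T\in\mathit{Aff}(4,3)$ with $T(S')=S$, and since affine transformations send lines to lines, the $10$ lines through $\vec 0$ that are secant to $S$ pull back to $10$ lines through $\vec a:=T^{-1}(\vec 0)$, each meeting $S'$ in a pair of points, and these $10$ pairs partition $S'$. Hence $\vec a$ is an anchor point for $S'$, which gives existence.

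Finally, the sum formula and uniqueness need only the pairing. If $S'=\bigcup_{i=1}^{10}\{p_i,q_i\}$ and each $\{p_i,q_i,\vec a\}$ is a line, then $p_i+q_i+\vec a\equiv\vec 0\bmod 3$, so $\sum_{p\in S'}p=\sum_{i=1}^{10}(p_i+q_i)\equiv -10\,\vec a\equiv -\vec a\bmod 3$ because $10\equiv 1\bmod 3$. Thus any anchor satisfies $\vec a\equiv -\sum_{p\in S'}p\bmod 3$, a vector determined by $S'$, so the anchor is unique; this also shows $\vec a\notin S'$, since $\vec a$ cannot lie in a pair it is collinear with. The only part of the argument that is not purely formal is the verification for the model cap $S$, namely that none of the $40$ lines through its anchor is tangent to it, and I expect to carry that out by the direct coordinate computation described above rather than by any clever structural observation.
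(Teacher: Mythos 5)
Your proposal is correct and follows essentially the same route as the paper: verify the pairing explicitly for the model cap $S$ with anchor $\vec{0}$, transfer to an arbitrary maximal cap via the affine equivalence of all maximal caps, and then use the collinearity criterion (three points sum to $\vec{0}\bmod 3$) to get $\sum_{p\in S'}p\equiv -10\,\vec{a}\equiv-\vec{a}$, which pins down the anchor uniquely. Your added observations (that it suffices to rule out tangent lines through $\vec 0$, and that $\vec a\notin S'$) are fine refinements but do not change the argument.
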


\begin{proof}
One can verify that the set $S$ pictured in Figure \ref{F:cap} contains no lines, so it must be a maximal cap.   $S$ consists of 10 pairs of points, where the third point completing the line for each pair is the point in the upper left, $\vec{0}$.  Since any other maximal cap is affinely equivalent to $S$, the same must be true for all caps.

Further, suppose $S_1$ is an arbitrary maximal cap with an anchor point $\vec{a}$.  Since the coordinates of three collinear points sum to $\vec{0}$ mod 3, if the coordinates for the points in $S_1$ are summed with 10$\vec{a}$, the result must be $\vec{0}$ mod 3.  Thus, the sum of the points in $S_1$ is $-\vec{a} \bmod 3$.  
\end{proof}

The following lemma was originally verified by Forbes via a computer search; we give a direct proof.  Note that  an affine transformation fixing the point corresponding to $\vec{0}$ is  a linear transformation.  This will simplify some of our arguments.

\begin{Lem}
There are 8424 maximal caps with anchor $\vec{0}$; they are all linearly equivalent.
\end{Lem}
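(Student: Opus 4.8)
The plan is to reduce the statement to the classical theory of ovoids in $PG(3,3)$. The linear-equivalence half is almost immediate: by Pellegrino and Hill all maximal caps of $AG(4,3)$ are affinely equivalent, and by the preceding lemma a cap's anchor is $-1$ times the sum of its points, hence is preserved by any affine transformation carrying one maximal cap to another; so two maximal caps with anchor $\vec 0$ differ by an affine transformation fixing $\vec 0$, which is linear. Thus the maximal caps with anchor $\vec 0$ form a single $GL(4,3)$-orbit, and the real content of the lemma is that this orbit has size $8424$.

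First I would set up a bijection with the $10$-caps of $PG(3,3)$. By the preceding lemma a maximal cap $S$ with anchor $\vec 0$ is a union of $10$ lines through $\vec 0$ with $\vec 0$ deleted, so $S = \{\pm\vec v_1,\dots,\pm\vec v_{10}\}$ where $\langle\vec v_1\rangle,\dots,\langle\vec v_{10}\rangle$ are $10$ distinct points of $PG(3,3)$. The crux is that $S$ contains no line of $AG(4,3)$ exactly when those $10$ points contain no line of $PG(3,3)$: a line of $AG(4,3)$ through $\vec 0$ meets $S$ in at most $2$ points, so it is never an obstruction, while a line $\{\vec x,\vec y,\vec z\}$ not through $\vec 0$ satisfies $\vec x+\vec y+\vec z = \vec 0$ with $\vec x,\vec y$ independent, so $\langle\vec x\rangle,\langle\vec y\rangle,\langle\vec z\rangle$ are three collinear points of $PG(3,3)$; conversely, three collinear points of $PG(3,3)$ lying among the $\langle\vec v_i\rangle$ can, by a short sign computation inside the plane $\langle\vec v_i,\vec v_j\rangle$ over $\mathbb{F}_3$, be represented by vectors summing to $\vec 0$, yielding a line inside $S$. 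A linear map fixes $S$ as a set precisely when it fixes the set of the $10$ subspaces $\langle\vec v_i\rangle$, so this bijection is equivariant for $GL(4,3)$ acting on one side and its induced action $PGL(4,3)$ on the other; in particular the $20$-point bound for caps in $AG(4,3)$ forces the $10$-point bound for caps in $PG(3,3)$, so $10$-caps are exactly the ovoids.

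Then I would quote the projective geometry: for odd $q$ every ovoid of $PG(3,q)$ is an elliptic quadric (Barlotti, Panella), and all elliptic quadrics are projectively equivalent (classification of non-degenerate quadratic forms), so the $10$-caps of $PG(3,3)$ form one $PGL(4,3)$-orbit. The stabiliser of an elliptic quadric in $PGL(4,3)$ is the group of projectivities fixing it, which via the exceptional isomorphism $\Omega_4^-(3) \cong PSL_2(9)$ is a copy of $P\Gamma L_2(9)$ of order $1440$. As $|GL(4,3)| = 80\cdot 78\cdot 72\cdot 54 = 24261120$ and $|PGL(4,3)| = |GL(4,3)|/2 = 12130560$, the orbit has $12130560/1440 = 8424$ elements. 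Equivalently, one can argue in $AG(4,3)$ directly: $-I$ fixes every maximal cap with anchor $\vec 0$ (it interchanges the two points of each pair), so $\mathrm{Stab}_{GL(4,3)}(S)$ is the full preimage of the quadric stabiliser under $GL(4,3)\to PGL(4,3)$ and hence has order $2880$, giving an orbit of size $24261120/2880 = 8424$; this dovetails with the group computations of Section~\ref{S:groups}.

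The one genuinely substantial step is the projective-geometry input: that every $10$-cap of $PG(3,3)$ is an elliptic quadric (so there is a single orbit), and that the quadric stabiliser has order exactly $1440$, equivalently that the symmetry group of $S$ inside $GL(4,3)$ has order $2880$. If a fully self-contained proof is preferred, the natural alternative is to imitate the $AG(3,3)$ argument above: fix the standard cap $S$ of Figure~\ref{F:cap}, carry an arbitrary maximal cap with anchor $\vec 0$ onto $S$ by constructing a linear map one basis vector at a time (using transitivity of $GL(4,3)$ on suitable frames and the fact that each choice is forced once enough points are pinned down), and read off the stabiliser order $2880$ from the explicit symmetries of $S$. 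That route trades outside results for more bookkeeping.
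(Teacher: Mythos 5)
Your proof is correct, but it takes a genuinely different route from the one in the paper. The paper works entirely inside $AG(4,3)$: it takes linear equivalence from Hill's affine-equivalence theorem (an affine map between two anchor-$\vec{0}$ caps must fix the anchor, hence is linear), and then counts the stabilizer of the explicit cap $S$ of Figure \ref{F:cap} by hand --- a matrix fixing $S$ is determined by the images of the four independent points $c_1,c_2,c_3,c_5$, and the choices are $20\cdot 18\cdot 8=2880$ (with the factor $8$ left to the reader) --- before applying orbit--stabilizer. You instead pass to $PG(3,3)$: since an anchor-$\vec{0}$ cap is $\{\pm\vec v_1,\dots,\pm\vec v_{10}\}$ and your sign computation correctly shows that lines of $AG(4,3)$ avoiding $\vec 0$ correspond exactly to collinear triples among the $\langle\vec v_i\rangle$, maximal caps with anchor $\vec 0$ biject $GL(4,3)$-equivariantly with ovoids of $PG(3,3)$; the Barlotti--Panella theorem and the classification of quadratic forms then give a single orbit of size $12130560/1440=8424$ (equivalently, $8424$ is the number of elliptic quadrics in $PG(3,3)$). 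Your route buys transitivity for free from the quadric classification rather than from Hill, replaces the paper's unverified factor of $8$ with the known order of the similitude group, and explains structurally why $A_6$ shows up in the stabilizer computations of Section \ref{S:groups}; the cost is importing nontrivial external machinery where the paper's argument is elementary and self-contained apart from one ``reader can verify'' step. Both computations of the stabilizer order ($2880$ in $GL(4,3)$, i.e.\ the full preimage of the order-$1440$ quadric stabilizer under $GL(4,3)\to PGL(4,3)$, using that $-I$ fixes every such cap) agree, so the two proofs corroborate each other.
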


\begin{proof}
All maximal caps in $AG(4,3)$ are affinely equivalent. The elements of $GL(4,3)$ send caps with anchor $\vec{0}$ to caps with anchor $\vec{0}$. Thus, by the Orbit-Stabilizer Theorem, we can count the number of caps with anchor $\vec{0}$ by counting the matrices in $GL(4,3)$ that send the cap $S$ to itself.  Since a linear transformation is determined by its action on a basis, we will find such a basis among the vectors corresponding to points in $S$. If we order the points of $S$ (as pictured in Figure \ref{F:cap}) lexicographically as $c_1, -c_1, c_2, -c_2, \cdots, c_{10}, -c_{10}$, we can see that $c_1, c_2, c_3$ and $c_5$ are linearly independent. So we will determine a matrix in $GL(4,3)$ fixing $S$ by specifying the images of those vectors.  

Looking at Figure \ref{F:cap},  $c_1$ can be sent to any of the 20 points.  $c_2$ can be sent to any of the 18 points that don't include the image of $c_1$ and $-c_1$.  The image of $c_3$ is restricted by the fact that once it is chosen, all points from $S$ in the hyperplane determined by $c_1$, $c_2$ and $c_3$ must go to points in $S$ as well, since all points in that hyperplane are linear combinations of  $c_1$, $c_2$ and $c_3$.  Not all choices for the image of $c_3$ will work.  Similarly, the image of $c_5$, the  last point in $S$ not in that hyperplane, does not have full freedom.  The reader can verify that, once the image of $c_1$ and $c_2$ are chosen, there are only 8 possibilities for the images of $c_3$ and $c_5$.  Thus, there are $20\cdot18\cdot8 = 2880$ matrices that fix $S$ as a cap.  Thus, there are $|GL(4,3)|/2880 = 8424$ caps with anchor $\vec{0}$. 
 \end{proof}
 
 The next theorem shows that $AG(4,3)$ can be partitioned into 4 disjoint maximal caps together with their common anchor, just as $AG(2,3)$ was.  This fact was first noticed by Forbes \cite{F} and Gordon \cite{Gar}. 

\begin{Thm}\label{Th:Ptn}
$AG(4,3)$ can be partitioned into 4 mutually disjoint maximal caps together with their common anchor $\vec{a}$.
\end{Thm}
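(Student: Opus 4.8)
The plan is to exhibit an explicit partition with common anchor $\vec a=\vec 0$, using a cyclic symmetry to keep the verification short. The cap $S$ of Figure \ref{F:cap} already has anchor $\vec 0$, and by the two preceding lemmas any linear image $gS$ with $g\in GL(4,3)$ is again a maximal cap, and its anchor is $g\vec 0=\vec 0$ (a linear map $g$ carries a cap with anchor $\vec a$ to one with anchor $g\vec a$, since $g$ preserves collinearity and hence the ``pairs through the anchor'' structure). A maximal cap with anchor $\vec 0$ has $20$ points, none equal to $\vec 0$, since the anchor never lies in its own cap — otherwise it would complete a line inside the cap. Because $4\cdot 20+1=81=|AG(4,3)|$, it suffices to find \emph{four pairwise disjoint} maximal caps with anchor $\vec 0$: they then exhaust $AG(4,3)\setminus\{\vec 0\}$, and adjoining $\vec 0$ gives the asserted partition. (For an arbitrary prescribed anchor one then just applies an affine transformation, which for $\mathit{Aff}(4,3)$ fixing a point is linear.)

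To produce the four caps I would search for a single $g\in GL(4,3)$ of order $4$ for which $S,\ gS,\ g^2S,\ g^3S$ are mutually disjoint; a suitable $g$ can be located from the explicit coordinates in Figure \ref{F:cap} together with the description, in the proof of the previous lemma, of the $2880$ matrices stabilizing $S$. The point of insisting that $g$ have order $4$ is that it collapses the six pairwise-disjointness conditions to two: applying $g$ turns $S\cap g^iS=\emptyset$ into $g^jS\cap g^{i+j}S=\emptyset$, and $g^4=I$ makes $S\cap g^3S=\emptyset$ equivalent to $S\cap gS=\emptyset$, so the whole statement follows once one checks $S\cap gS=\emptyset$ and $S\cap g^2S=\emptyset$ — a finite verification on $20$-element subsets of $\mathbb{F}_3^4$. (If no order-$4$ element happens to have this property, the fallback is simply to write down three further caps $S_2,S_3,S_4$ as explicit linear images of $S$ and verify the six intersections by hand; the cardinality bookkeeping still finishes the proof, and this is essentially how Forbes \cite{F} first found the partition by computer.) Undoing the normalization, $AG(4,3)=\{\vec a\}\sqcup S\sqcup gS\sqcup g^2S\sqcup g^3S$.

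The only genuine obstacle is the existence of such a $g$ — equivalently, that four disjoint $20$-caps sharing an anchor fit inside $AG(4,3)$ at all — which is precisely the content of the theorem and cannot be gotten for free; everything else (the reduction to anchor $\vec 0$, the observation that pairwise disjointness plus the count forces a partition, the translation of the disjointness test into an arithmetic check) is routine. The role of the order-$4$ symmetry is purely expository and computational: it compresses the unavoidable finite check from six set-intersections to two, and lets the partition be displayed as one $\langle g\rangle$-orbit of caps rather than an unmotivated list.
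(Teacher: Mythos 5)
Your proof is essentially the paper's: the theorem is established by exhibiting one explicit partition and checking it (the paper simply displays it in Figure \ref{F:decomp} and leaves the verification to the reader). Your reduction is correct and slightly tightens the bookkeeping: since a maximal cap with anchor $\vec{0}$ has $20$ points none of which is $\vec{0}$, and $4\cdot 20+1=81$, it really does suffice to produce four pairwise disjoint maximal caps with anchor $\vec{0}$, and disjointness is a finite arithmetic check on subsets of $\mathbb{F}_3^4$.

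One caution about the order-$4$ shortcut, which you correctly flag as the only point of risk. If $S,gS,g^2S,g^3S$ were four mutually disjoint maximal caps with $g^4=I$, they would form one of the partitions analyzed later in the paper, and $g$ would induce a $4$-cycle on its blocks. An invertible linear map permutes the set of all such partitions, so it preserves the number of partitions containing a given pair of disjoint caps; by Proposition \ref{P:transfptns}(b) and Lemma \ref{L:pairs}, the four caps of a partition carry a distinguished perfect matching (the $1$- or $2$-completable pairs, each lying in $1$ or $2$ partitions, versus the cross pairs lying in $6$), and the only matching invariant under a $4$-cycle is the ``antipodal'' one. So any such $g$ is forced to satisfy: $g^2S$ is the $1$- or $2$-completable partner of $S$, while $gS$ and $g^3S$ are $S$-$6$-completable. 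Whether an element of $GL(4,3)$ realizing this actually exists is not settled by anything in the paper, so the cyclic presentation may well be unavailable; your stated fallback (list three further explicit caps and check the six intersections) is then exactly what Figure \ref{F:decomp} amounts to, and the proof goes through on that basis.
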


\begin{proof}  One such partition, where $S$ pictured above is one of the maximal caps, is shown in Figure \ref{F:decomp}.  The reader can verify that the claims in the theorem hold for this partition.  
\end{proof}

\begin{figure}[htbp]
\begin{center}
\includegraphics[width=1.3in]{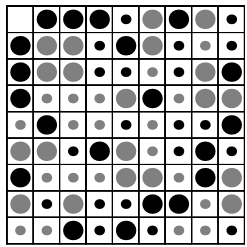}
\caption{A partition of $AG(4,3)$ into 4 disjoint maximal caps; the anchor point $\vec{0}$ is in the upper left.}
\label{F:decomp}
\end{center}
\vspace{-.1in}
\end{figure}

The goal of the rest of this paper is to study these partitions.

\medskip

The next proposition has been verified by computer search.  It would be instructive to have a geometric proof of this fact, as it implies something important about the structure of maximal caps.  The proposition is very useful in understanding the structure of the partitions, for it shows that the partitions in Theorem \ref{Th:Ptn} are the only kind of partitions of $AG(4,3)$ that can include disjoint maximal caps.

\begin{Prop}
Any two maximal caps with different anchor points intersect in at least one point.
\end{Prop}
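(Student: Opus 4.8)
The plan is to argue by contradiction, using the fact (the first Lemma above) that a maximal cap with anchor $\vec a$ is a union of ten lines through $\vec a$, each with $\vec a$ deleted. Suppose $S_1,S_2$ are disjoint maximal caps with distinct anchors. One case is essentially free: if the anchor of each cap lies in the other cap, translate so that one anchor is $\vec 0$ and call the other $\vec b\ne\vec 0$. The line through the two anchors is $\{\vec 0,\vec b,2\vec b\}$; since $\vec b$ lies in the cap anchored at $\vec 0$, that cap contains one of its ten lines through $\vec 0$ passing through $\vec b$, namely $\{\vec 0,\vec b,2\vec b\}$ itself, so $2\vec b$ lies in it; symmetrically $2\vec b$ lies in the other cap, contradicting disjointness. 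Hence one of the two caps has its anchor outside the other; rename that one $S_2$, with anchor $\vec b\notin S_1$, call the other $S_1$, and translate so that $S_1$ has anchor $\vec 0$. The $21$-point cone $S_2\cup\{\vec b\}$, a union of ten lines through $\vec b$, then lies inside $C_1=AG(4,3)\setminus S_1$.

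The crucial point is that $C_1$ is itself a cone with vertex $\vec 0$: as $S_1\cup\{\vec 0\}$ is the union of ten of the forty lines through $\vec 0$, the $61$-point set $C_1$ is exactly the union of the remaining thirty. Project $AG(4,3)$ from $\vec 0$ onto the forty lines through $\vec 0$, which we identify with $PG(3,3)$. Then $S_1$ maps onto a $10$-point set $O_1$ with no three collinear points --- a plane through $\vec 0$ containing three of the ten lines of $S_1\cup\{\vec 0\}$ would carry $\ge 6$ points of a cap in $AG(2,3)$, which is impossible --- so $O_1$ is a cap with $10$ points, an ovoid of $PG(3,3)$ (all of these are projectively equivalent elliptic quadrics, as already follows from the preceding Lemma), while $C_1$ maps onto $PG(3,3)\setminus O_1$. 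Now pick one of the ten lines $m$ of the cone $S_2\cup\{\vec b\}$ with $m\ne\{\vec 0,\vec b,2\vec b\}$. The plane $\langle\vec 0,m\rangle$ projects to a line $L$ of $PG(3,3)$ through the image $\bar b$ of $\vec b$, and the two points of $m\cap S_2$ project to two distinct points of $L$, both different from $\bar b$ and both lying off $O_1$ (they are in $C_1$). Since $\bar b\notin O_1$ as well, $L$ must be tangent to or disjoint from $O_1$: a secant line through $\bar b$ would have just one of its three remaining points off $O_1$, too few to accommodate the two images. Finally, at most two of the ten lines $m$ can give the same plane $\langle\vec 0,m\rangle$, since three of them would put $\ge 6$ points of the cap $S_2$ into a single plane.

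It remains to derive a contradiction from this configuration, together with the residual sub-cases --- whether or not $\{\vec 0,\vec b,2\vec b\}$ is one of the ten lines $m$ (equivalently, whether $\vec 0\in S_2$), handled in the same way. By the standard polarity of an elliptic quadric, a point $\bar b\notin O_1$ of $PG(3,3)$ lies on exactly $4$ tangent, $3$ secant, and $6$ external lines to $O_1$; so the ten cone lines must project, at most two-to-one, into the $4+6=10$ non-secant lines through $\bar b$, and the chosen pair of off-$O_1$ points on each of these must reassemble into an elliptic quadric $O_2$ whose associated cone has vertex $\vec b$. I expect this last step to be the main obstacle: ruling out such an $O_2$ seems to require a genuine analysis of how an elliptic quadric and its cone meet the tangent/external pencil at an external point of $PG(3,3)$ --- in effect a small but honest case check --- which is presumably why the proposition was originally verified by computer. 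A clean synthetic completion of precisely this step would be the ``instructive'' geometric proof the authors call for.
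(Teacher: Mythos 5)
Your argument does not reach a conclusion: you set up a reduction and then explicitly stop at the point where a contradiction would have to be derived, so as written this is not a proof of the proposition. The reduction itself is sound as far as it goes --- the disposal of the case where each anchor lies in the other cap (both caps would contain $2\vec b$), the observation that $S_1$ projects from $\vec 0$ to a $10$-cap $O_1$ in $PG(3,3)$ (an elliptic quadric), the fact that each cone line $m$ of $S_2\cup\{\vec b\}$ other than $\{\vec 0,\vec b,2\vec b\}$ projects to a non-secant line of $O_1$ through $\bar b$, and the tangent/secant/external count $4+3+6$ at a point off an elliptic quadric in $PG(3,3)$ are all correct. But a capacity count at this stage does not close the argument: the $4$ tangent and $6$ external lines through $\bar b$ carry $4\cdot 2+6\cdot 3=26$ points off $O_1$ other than $\bar b$, and with at most two cone lines per plane the ten cone lines fit comfortably, so no contradiction falls out of the configuration you have isolated. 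The essential content of the proposition --- why no second anchored cone can live inside the complement of the first --- is exactly the step you defer, and deferring it means the proposition remains unproved.

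For comparison: the paper does not supply a geometric proof either. Its ``proof'' is a computer verification --- using the fact that the $8424$ caps with anchor $\vec a$ are the translates $S_i+\vec a$ of the $8424$ caps with anchor $\vec 0$, it checks $S\cap(S_i+\vec a)\neq\emptyset$ for all $i$ and all $80$ choices of $\vec a$ --- and the authors explicitly remark that a geometric proof would be instructive. So your projective setup is a genuinely different and potentially valuable route, and nothing in it conflicts with the paper; it is simply incomplete. To turn it into a proof you would need to exploit more than the line-type count at $\bar b$: for instance, that $S_2$ also projects from $\vec b$ to an elliptic quadric, or a global count over all $30$ points off $O_1$ of how the $30$ lines of $C_1$ through $\vec 0$ distribute --- some honest case analysis of how two elliptic quadrics and their cones can interlock is still required.
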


\begin{proof}
Because any two maximal caps are affinely equivalent, it suffices to verify that a given maximal cap has nonempty intersection with all caps with all other anchor points.  Let $S$ be the maximal cap with anchor $\vec{0}$ pictured  in Figure \ref{F:cap}.  Let $\{S_1,\dots,S_{8424}\}$ be the set of 8424 maximal caps with anchor $\vec{0}$.  For a cap $S_i$ in that set, if we add $\vec{a}$ (mod 3) to each point in $S_i$ (which we write as $S_i + \vec{a}$), we get a cap with anchor $\vec{a}$.  (This is because $S_i + \vec{a}$ must contain no lines, 
and if $S+\vec{a}=T+\vec{a}$ as sets, then $S=T$.)  Thus,  $\{S_i + \vec{a}\}$ is the set of 8424 maximal caps with anchor $\vec{a}$.  

A computer check  ran through all 80 possible anchor points $\vec{a}$ and verified that $S$ and $S_i + \vec{a}$ had nonempty intersection for $1\leq i \leq 8424$.
\end{proof}

The same computer check verified the first claim in the next proposition.  The last claims in the proposition were shown by a different computer search by Forbes \cite{F}.  

\begin{Prop}
Let $S$ be a maximal cap with anchor $\vec{0}$.
There are 198 maximal caps (necessarily with anchor $\vec{0}$) disjoint from $S$.  There are 216 different partitions of $AG(4,3)$ containing $S$ as a block; each of the 198 caps disjoint from $S$ is in at least one of the 216 partitions.  
\end{Prop}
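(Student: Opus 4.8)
The plan is to transport the whole question into $PG(3,3)$. By the anchor Lemma, a maximal cap with anchor $\vec 0$ is a union of ten lines of $AG(4,3)$ through $\vec 0$ with the point $\vec 0$ deleted; and since three nonzero points $\pm\vec u,\pm\vec v,\pm\vec w$ are collinear exactly when $\vec w\in\langle\vec u,\vec v\rangle$, ten such lines form a cap precisely when, viewed as ten points of $PG(3,3)$ (whose forty points are exactly those lines), no three of them are collinear. Since a maximal cap of $AG(4,3)$ has $20$ points, these ten-point sets are the largest caps of $PG(3,3)$, i.e.\ ovoids --- by the classical theory, elliptic quadrics $Q^-(3,3)$, which carry an associated orthogonal polarity. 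Two such affine caps are disjoint if and only if the corresponding ovoids are disjoint as point sets of $PG(3,3)$. Writing $\mathcal O=\mathcal O_S$ for the ovoid of $S$, and combining the previous Proposition (caps with distinct anchors meet, so the four caps of a partition share an anchor $\vec 0$, and --- as in Theorem~\ref{Th:Ptn} --- the leftover point is that anchor) with the fact that there are only forty lines through $\vec 0$, a partition of $AG(4,3)$ having $S$ as a block amounts to a partition of the $30$ points of $PG(3,3)\setminus\mathcal O$ into three ovoids. So it suffices to prove, for a fixed ovoid $\mathcal O$ of $PG(3,3)$: (i) exactly $198$ ovoids are disjoint from $\mathcal O$; (ii) the $30$ points off $\mathcal O$ can be partitioned into three ovoids in exactly $216$ ways; and (iii) every ovoid disjoint from $\mathcal O$ is a block of at least one such partition.

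The second step is to use symmetry to cut (i)--(iii) down to a short computation. All ovoids of $PG(3,3)$ are projectively equivalent --- this is the Lemma counting the caps with anchor $\vec 0$, read projectively --- and the setwise stabilizer $H=\mathrm{Stab}_{PGL(4,3)}(\mathcal O)$ is the image of $\mathrm{Stab}_{GL(4,3)}(S)$, which has order $2880$ by that Lemma and contains $-I$ (because $S=-S$); hence $|H|=1440$. The group $H$ acts on the $30$ external points, on the set of ovoids disjoint from $\mathcal O$, and on the set of three-ovoid partitions of the complement, and I would do the count orbit by orbit. To locate orbit representatives efficiently I would use the polarity of $\mathcal O$: the $30$ external points correspond bijectively to the $30$ secant planes (the $10$ tangent planes matching the $10$ points of $\mathcal O$), each secant plane carries $9$ external points, and an ovoid disjoint from $\mathcal O$ meets each secant plane in at most $4$ of those $9$ and in $3$ on average, which tightly restricts where it can sit. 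For each candidate representative $\mathcal O'$ one computes $|\mathrm{Stab}_H(\mathcal O')|$, and orbit--stabilizer then gives the orbit sizes; summing them yields $198$ for (i) and $216$ for (ii). For (iii) it is enough, for each $H$-orbit of ovoids disjoint from $\mathcal O$, to exhibit a single representative $\mathcal O'$ that extends --- one for which the remaining $20$ points $PG(3,3)\setminus(\mathcal O\cup\mathcal O')$ contain two disjoint ovoids --- since then every ovoid in that orbit extends as well.

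I do not expect a closed-form reason for the numbers $198$ and $216$: the way one ovoid sits among the external points of another, and the way three ovoids tile those $30$ points, looks rigid but not formulaic, so the argument is likely to stay a finite verification --- a much smaller one than the naive search, since the reformulation replaces the $81$-point geometry by the $40$-point geometry $PG(3,3)$ and the $1440$-element group $H$ collapses the enumeration to a handful of orbit representatives. The step needing real care is proving that the candidate representatives are pairwise $H$-inequivalent and that they exhaust the orbits; once that is in place, (iii) is a small batch of extension checks. A useful consistency check to run alongside is the double count in which each of the $216$ partitions is tallied once for every one of its three blocks: this gives $3\cdot 216=648$, which must equal the sum, over the $198$ ovoids disjoint from $\mathcal O$, of the number of those partitions in which each appears. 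That identity cross-checks the orbit sizes found in (i) and (ii) and pins down the multiplicity with which each orbit's ovoids occur among the $216$ partitions (and, restricted to partitions, the $H$-orbits should recover the two affine equivalence classes of partitions of $AG(4,3)$ identified elsewhere in this section).
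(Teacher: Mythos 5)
Your reformulation is sound, and it is a genuinely different route from the paper's, which offers no mathematical argument at all for this proposition: the first claim is attributed to the same computer check used for the preceding proposition (translating the 8424 anchor-$\vec{0}$ caps and testing intersections with $S$), and the counts of partitions are attributed to a separate computer search by Forbes. Your key structural observations all check out: a cap with anchor $\vec{0}$ is a union of ten pairs $\{\pm\vec{v}\}$, three such pairs contain an affine line exactly when the corresponding points of $PG(3,3)$ are collinear, so anchor-$\vec{0}$ caps are precisely the $10$-point caps (ovoids, hence elliptic quadrics) of $PG(3,3)$; the preceding proposition forces all blocks of a partition through $S$ to have anchor $\vec{0}$, and since the union of four disjoint symmetric caps is symmetric, the leftover point is $\vec{0}$, so partitions containing $S$ correspond exactly to partitions of the $30$ external points of $\mathcal{O}_S$ into three ovoids; and $|H|=2880/2=1440$ is correct since $-I$ stabilizes $S$. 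Your double-count $3\cdot216=36+2\cdot90+6\cdot72=648$ is consistent with the multiplicities the paper later assigns. What your approach buys is a substantial shrinking of the verification (a $40$-point geometry, a $1440$-element group, a handful of orbit representatives) and a conceptual explanation of where these caps live; what it does not yet deliver is the numbers themselves: the orbit representatives, their stabilizer orders, and the proof that they exhaust the orbits --- the step you yourself flag as the delicate one --- are never produced, so $198$ and $216$ are still the announced output of an unexecuted finite computation. In that sense your proposal ends at the same evidentiary point as the paper (a finite check), but with a better-organized and independently checkable computation, essentially the ovoid/elliptic-quadric viewpoint the paper never makes explicit.
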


While the group  $\mathit{Aff}(n,3)$ acts transitively on maximal caps, there are three equivalence classes for pairs $(S_1,S_2)$ of disjoint caps.  
Consider Figures \ref{F:1-2completables} and  \ref{F:6completables} below.  Let $S$ be the maximal cap with anchor $\vec{0}$ in large black dots (this is the same cap pictured in Figure \ref{F:cap}).  Three different caps $C$ disjoint from $S$ are pictured in large grey dots in Figures \ref{F:1-2completables}(a), (b) and  \ref{F:6completables}.  In each case, there are 40 points not in $\{\vec{0}\} \cup S \cup C$.  In Figure \ref{F:1-2completables}(a), those points can be partitioned into 2 disjoint maximal caps in only one way; in Figure \ref{F:1-2completables}(b), they can be partitioned into 2 disjoint maximal caps in two different ways.  In Figure \ref{F:6completables}, they can be partitioned into 2 disjoint maximal caps in six different ways.

\begin{figure}[h]
\begin{center}
$\begin{array}{c@{\hspace{1.3in}}c@{\hspace{.3in}}c}
\multicolumn{1}{l}{\mbox{ }} &
	\multicolumn{1}{l}{\mbox{ }}&
	\multicolumn{1}{l}{\mbox{ }} \\ [-0.53cm]
\includegraphics[width=1.25in]{1-comp} &
\includegraphics[width=1.25in]{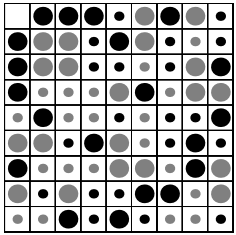} &
\includegraphics[width=1.25in]{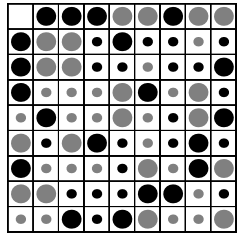}
\end{array}$
\newline \hspace*{-1.1in}(a) 1-completable  \hspace{2.1in} (b)  2-completable 
\caption{Partitions of $AG(4,3)$ containing $S$ (in large black dots) and (a) a 1-completable cap and (b) a 2-completable cap (in large gray dots).}
\label{F:1-2completables}
\end{center}
\end{figure}

\begin{figure}[h]
\begin{center}
$\begin{array}{c@{\hspace{.1in}}c@{\hspace{.1in}}c@{\hspace{.1in}}c@{\hspace{.1in}}c@{\hspace{.1in}}c}
\multicolumn{1}{l}{\mbox{ }} & \multicolumn{1}{l}{\mbox{ }}&
	\multicolumn{1}{l}{\mbox{ }} \\ [-0.53cm]
\includegraphics[width=.9in]{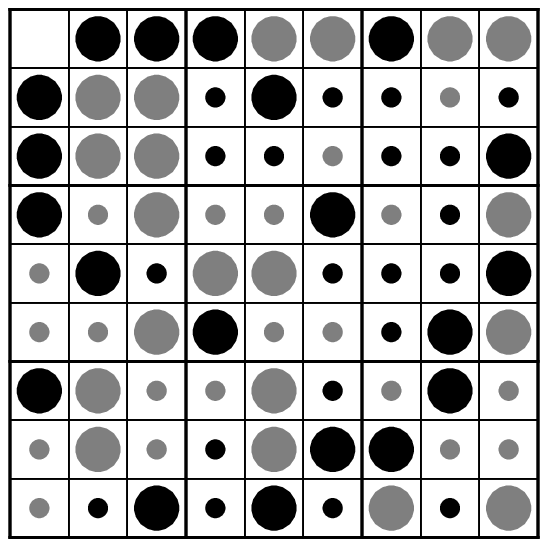} &
\includegraphics[width=.9in]{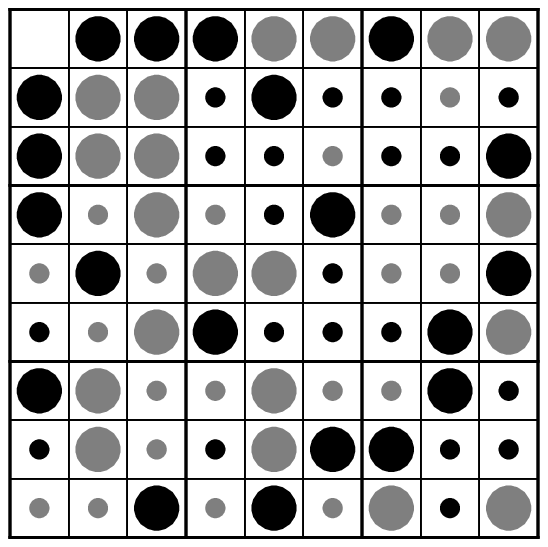} &
\includegraphics[width=.9in]{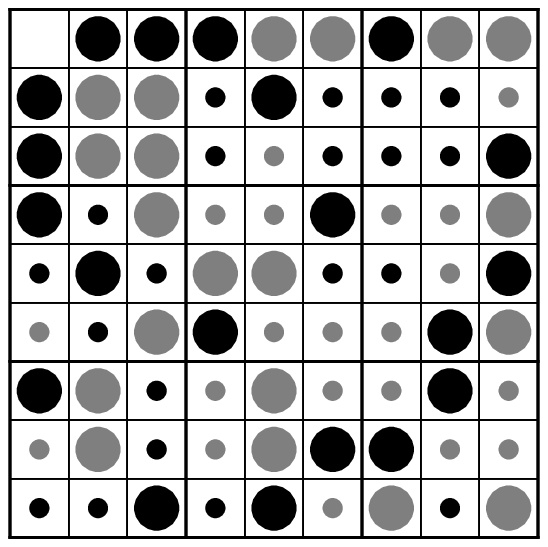} &
\includegraphics[width=.9in]{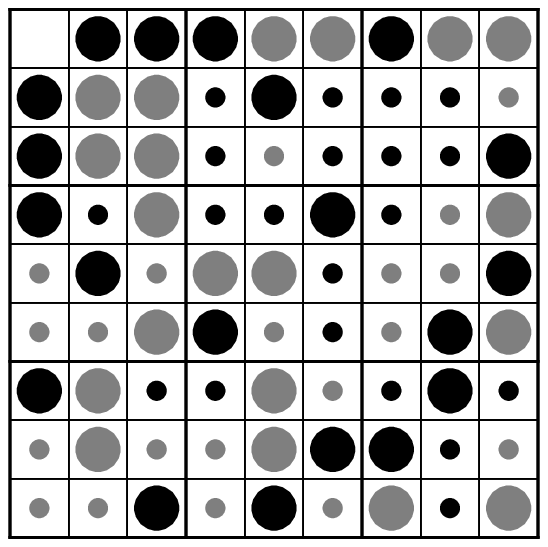} &
\includegraphics[width=.9in]{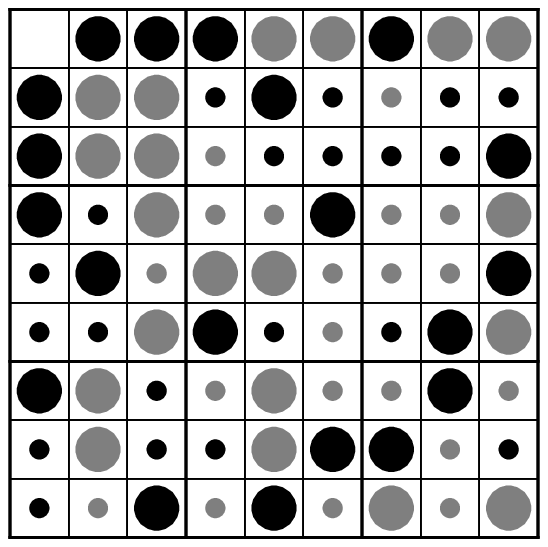} &
\includegraphics[width=.9in]{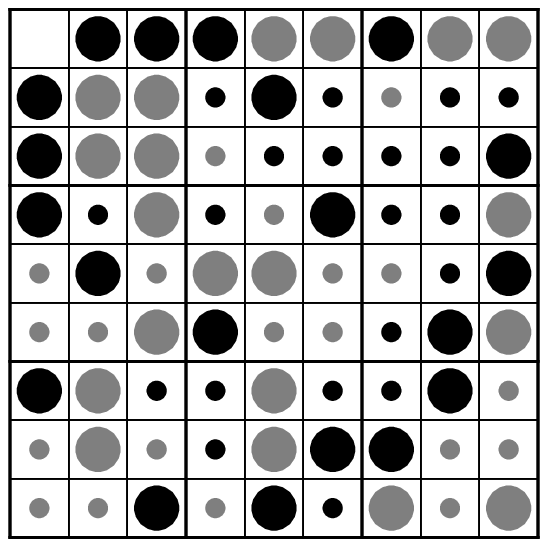}
\end{array}$
\caption{Partitions of $AG(4,3)$ containing $S$ (in large black dots) and a 6-completable cap (in large gray dots).}
\label{F:6completables}
\end{center}
\end{figure}

\begin{Def}
Let $S$ be a maximal cap with anchor $\vec{0}$.  There are 36 maximal caps disjoint from $S$ that appear in only one partition of $AG(4,3)$ containing $S$; there are 90 maximal caps disjoint from $S$ that appear in exactly two partitions of $AG(4,3)$ containing $S$; and  there are 72 maximal caps disjoint from $S$ that appear in exactly six partitions of $AG(4,3)$ containing $S$.  We call these caps {\em $S$-1-completable caps}, {\em $S$-2-completable caps} and {\em $S$-6-completable caps}, respectively.
\end{Def}

Thus, if a maximal cap $S$ is chosen,  the 198 caps disjoint from $S$ are not all affinely equivalent when $S$ is fixed as a set.  
The next proposition summarizes how a linear transformation that fixes $S$ as a set permutes the maximal caps disjoint from $S$ and the partitions containing $S$.  These results were verified by applying linear transformations to caps and partitions.

\begin{Prop}\label{P:transfptns}
Let $S$ be a maximal cap with anchor $\vec{0}$, and let  $\mathcal{T}$ be the group of linear transformations of $AG(4,3)$ that fix  $S$ as a set; let $T \in \mathcal{T}$.  
\begin{enumerate}\renewcommand{\labelenumi}{(\alph{enumi})}
\item If $S_i$ is $S$-$i$-completable, then so is $T(S_i)$, for $i=1,2,6$.  
\item Any partition of $AG(4,3)$ containing $S$ will have two $S$-6-completable caps and either  an $S$-1-completable or an $S$-2-completable cap.  
\item The 216 partitions of $AG(4,3)$ containing $S$ are in two different  equivalence classes under the action of $\mathcal{T}$.  $E_1$ contains 36 partitions that consist of $\{\{\vec{0}\},S, S_1,S_{61},S'_{61}\}$, where $S_1$ is $S$-1-completable and $S_{61}$ and $S_{61}'$ are both $S$-6-completable.  $E_2$ contains 180 partitions that consist of $\{\{\vec{0}\},S, S_2,S_{62},S_{62}'\}$, where $S_2$ is $S$-2-completable and $S'_{62}$ and $S_{62}'$ are both $S$-6-completable.  
\item $\mathcal{T}$ acts transitively on $E_1$ and acts transitively on $E_2$.  If $\,\Pi=\{\{\vec{0}\},S,A,B,C\}$ and $\Pi'=\{\{\vec{0}\},S,A',$ $B', C'\}$, where $A$ and $A'$ are both either $S$-1-completable  or $S$-2-completable and $B$, $C$, $B'$ and $C'$ are $S$-6-completable, then half the matrices in $\mathcal{T}$ that fix $S$ and send $A$ to $A'$  send $B$ to $B'$ and $C$ to $C'$ and half send $B$ to $C'$ and $C$ to $B'$.
\item An $S$-6-completable cap appears in exactly one partition in $E_1$ and in five partitions in $E_2$.
\end{enumerate}
\end{Prop}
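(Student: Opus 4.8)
The plan is to prove (e) by a double count of incidences between $S$-$6$-completable caps and partitions of $AG(4,3)$ containing $S$, and then to make that count uniform using the transitivity recorded in parts (a)--(d) of Proposition \ref{P:transfptns}.

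First I would set up the count. By part (b) every partition containing $S$ has exactly two $S$-$6$-completable blocks, and by part (c) there are $36$ such partitions in $E_1$ and $180$ in $E_2$. Hence the number of incident pairs $(S_6,\Pi)$ with $S_6$ an $S$-$6$-completable cap, $\Pi\in E_1$ and $S_6\in\Pi$ is $2\cdot 36=72$, and likewise there are $2\cdot 180=360$ incident pairs with $\Pi\in E_2$. On the other hand each of the $72$ $S$-$6$-completable caps lies in exactly $6$ partitions containing $S$; writing $e_1(S_6)$ and $e_2(S_6)$ for how many of those $6$ lie in $E_1$ and in $E_2$, we have $e_1(S_6)+e_2(S_6)=6$ for every $S_6$, while summing over the $72$ caps gives $\sum e_1=72$ and $\sum e_2=360$. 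So the average of $e_1$ is $1$ and the average of $e_2$ is $5$, and it remains only to prove that $e_1$ (equivalently $e_2$) is constant.

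Constancy follows from transitivity of $\mathcal T$ on the $72$ $S$-$6$-completable caps: a $T\in\mathcal T$ fixes $S$, carries partitions to partitions, preserves $E_1$ and $E_2$ by part (d), and preserves the property of being $S$-$6$-completable by part (a), so it sends the $e_1(S_6)$ partitions of $E_1$ through $S_6$ bijectively onto those through $T(S_6)$; transitivity then forces $e_1\equiv 72/72=1$ and hence $e_2\equiv 5$, which is exactly (e). I expect establishing this transitivity to be the crux. It is of the same computational character as the rest of Proposition \ref{P:transfptns} and drops out of the orbit data produced when one runs through $\mathcal T$; but it can also be pushed most of the way by hand. Part (d) applied with $\Pi=\Pi'$ shows the stabilizer in $\mathcal T$ of any partition containing $S$ contains an element interchanging its two $S$-$6$-completable blocks, so (using transitivity of $\mathcal T$ on $E_1$ and on $E_2$) the $72$ incident pairs over $E_1$ form a single $\mathcal T$-orbit and the $360$ over $E_2$ form a single $\mathcal T$-orbit; projecting to the cap coordinate, the set $O_1$ of $S$-$6$-completable caps meeting some $E_1$ partition and the set $O_2$ of those meeting some $E_2$ partition are each a single $\mathcal T$-orbit, and $O_1\cup O_2$ is all $72$ caps since $e_1+e_2=6>0$. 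As distinct orbits are disjoint, either $O_1=O_2$ (both equal to the full set, which finishes the proof as above) or $O_1$ and $O_2$ are disjoint, which forces the numerology $e_1\equiv 6$ on a $12$-cap orbit and $e_1\equiv 0$ on a $60$-cap orbit. Ruling out this last degenerate alternative is the genuinely resistant step; the cleanest finish is to exhibit a single transformation in $\mathcal T$ (for instance using Swingset) taking an $S$-$6$-completable block of some partition in $E_1$ to an $S$-$6$-completable block of some partition in $E_2$, which gives $O_1\cap O_2\neq\varnothing$ and collapses the alternatives.
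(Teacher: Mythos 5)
Your proposal addresses only part (e), taking (a)--(d) as given; but (a)--(d) are themselves part of the statement to be proved. The paper offers no structural argument for any of the five parts: it simply records that all of them ``were verified by applying linear transformations to caps and partitions,'' i.e.\ by direct computation with the 2880 matrices fixing $S$, the 198 disjoint caps, and the 216 partitions. So a complete proof in the spirit of your write-up would still have to supply that computational (or other) verification of (a)--(d), which you do not attempt. Your double count itself is sound and is a genuinely nice observation the paper does not make: granting (a)--(d), the incidence count $2\cdot 36=72$ and $2\cdot 180=360$ over the $72$ $S$-$6$-completable caps pins the averages of $e_1$ and $e_2$ at $1$ and $5$, so (e) reduces to constancy of $e_1$, i.e.\ to transitivity of $\mathcal{T}$ on the $S$-$6$-completable caps.

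That reduction, however, is exactly where your argument stops short. You correctly note that (d) (with $\Pi=\Pi'$ and with $\Pi\neq\Pi'$) makes the incident pairs over $E_1$ a single $\mathcal{T}$-orbit and likewise over $E_2$, so the caps split into at most two orbits $O_1,O_2$; but the degenerate alternative $O_1\cap O_2=\varnothing$, with $e_1\equiv 6$ on a $12$-element orbit and $e_1\equiv 0$ on a $60$-element orbit, is numerically self-consistent and is not excluded by anything you derive. You acknowledge this and propose to finish by ``exhibiting a single transformation'' carrying a $6$-completable block of an $E_1$ partition to one of an $E_2$ partition, but no such transformation (or any other argument) is exhibited. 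Since ruling out that alternative is, by your own account, the crux, part (e) remains unproved even conditionally on (a)--(d); as it stands the proposal is an incomplete reduction rather than a proof of the proposition.
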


We have been considering partitions containing a particular maximal cap $S$ with anchor $\vec{0}$.  Because all maximal caps are affinely equivalent, this was sufficient (and much more convenient) for analyzing the group action.  We now broaden our perspective to consider all partitions with anchor $\vec{0}$, which will extend  to all partitions. 

How many  partitions are there with anchor $\vec{0}$?  These partitions are not all linearly equivalent, but how many equivalence classes are there?  
There are 8424 caps we could have chosen as our fixed cap and 216 partitions containing that cap, but then each partition was counted 4 times.  Thus, there are 454,896 partitions with anchor $\vec{0}$.  These partitions are acted on by the full general linear group, $GL(4,3)$, which has order 24,261,120.  However,  454,896 does not divide 24,261,120, so the partitions must be in at least two equivalence classes.  To understand these equivalence classes, we need to understand how the caps in the partitions behave with respect to each other.

Let $\{\{\vec{0}\},A,B,C,D\}$ be a partition of $AG(4,3)$ into 4 mutually disjoint maximal caps together with their anchor point.  Clearly, if $B$ is $A$-1-completable (respectively $A$-2-completable, $A$-6-completable), then $A$ is $B$-1-completable (respectively $B$-2-completable, $B$-6-completable).  This motivates the next definition.

\begin{Def}
Let $\{\{\vec{0}\},A,B,C,D\}$ be a partition of $AG(4,3)$ into 4 mutually disjoint maximal caps together with their anchor point.  We say $\{A,B\}$ is a {\em 1-completable pair} (respectively a {\em 2-completable pair}) if the set $\{A,B\}$ appears in exactly one partition (respectively exactly two partitions). 
\end{Def}

The next lemma shows that the two 6-completable caps in a partition are themselves a 1-completable or 2-completable pair.  Further, a partition of $AG(4,3)$ into 4 mutually disjoint maximal caps and their anchor point must consist of two pairs of caps, where either both pairs are 1-completable or both are 2-completable.  This also means that the other pair has both caps 6-completable with respect to either cap in the first pair. 

\begin{Lem}\label{L:pairs} Let $\{\{\vec{0}\},A,B,C,D\}$ be a partition of $AG(4,3)$  into 4 mutually disjoint maximal caps together with their anchor point.
\begin{enumerate}
\item If $B$ is $A$-1-completable or $A$-2-completable, then $D$ is $C$-1-completable or $C$-2-completable.
\item Let $\{\{\vec{0}\},A,B,C,D\}$ be a partition of $AG(4,3)$ into 4 mutually disjoint maximal caps together with their anchor point.  Then $\{A,B\}$ is a 1-completable pair if and only if $\{C,D\}$ is a 1-completable pair.  Thus, $\{A,B\}$ is a 2-completable pair if and only if $\{C,D\}$ is a 2-completable pair.
\end{enumerate}
\end{Lem}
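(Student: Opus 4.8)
\emph{Part (1).} The plan is to encode the partition as a graph and recognize it as a perfect matching. On the vertex set $\{A,B,C,D\}$ put an edge between two caps precisely when neither is $6$-completable with respect to the other, i.e.\ when one of them (equivalently, by the symmetry of completability noted before the definition of a completable pair, the other) is $1$- or $2$-completable with respect to it. Applying Proposition~\ref{P:transfptns}(b) in turn with each of $A,B,C,D$ playing the role of $S$ tells us that among the three caps of the partition other than the chosen one, exactly two are $6$-completable and exactly one is $1$- or $2$-completable with respect to it; so every vertex of this graph has degree exactly $1$. A $1$-regular graph on four vertices is a perfect matching consisting of two disjoint edges. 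Hence if $B$ is $A$-$1$- or $A$-$2$-completable, then $\{A,B\}$ is an edge, and the only way to complete the matching is with $\{C,D\}$, so $D$ is $C$-$1$- or $C$-$2$-completable. The same picture records the remark preceding the lemma: each cap in one matched pair is $6$-completable with respect to each cap in the other.

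\emph{Part (2), reduction.} First I would observe that the type of a pair of disjoint anchor-$\vec{0}$ caps --- the number of partitions containing both --- is intrinsic, and that by part~(1) such a pair is a matched pair of \emph{every} partition containing it when that number is $1$ or $2$, and a ``cross pair'' of every such partition when it is $6$; in particular being a $1$-completable pair does not depend on which partition is in view. Since all maximal caps with anchor $\vec{0}$ are linearly equivalent (the Lemma counting the $8424$ of them), applying a matrix of $GL(4,3)$ that sends one of $A,B,C,D$ to $S$ carries our partition to a partition containing $S$ and preserves every completability count; so it suffices to show that the two matched pairs of any partition containing $S$ have the same type. Such a partition lies in $E_1$ or $E_2$ of Proposition~\ref{P:transfptns}(c), and the matched pair through $S$ has type $1$ in the first case and $2$ in the second. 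Because $\mathcal{T}\le GL(4,3)$ is transitive on $E_1$ and on $E_2$ (Proposition~\ref{P:transfptns}(d)) and preserves pair types, the \emph{other} matched pair has a single well-defined type $\tau_1$ throughout $E_1$ and $\tau_2$ throughout $E_2$; thus the lemma is now equivalent to the numerical assertion $\tau_1=1$ and $\tau_2=2$.

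\emph{Part (2), the count.} To pin down $\tau_1,\tau_2$ I would count type-$1$ matched pairs over the 454,896 partitions with anchor $\vec{0}$. For a partition $\Pi$ write $a(\Pi)\in\{0,1,2\}$ for the number of its matched pairs of type $1$. Each type-$1$ pair of disjoint anchor-$\vec{0}$ caps lies in exactly one partition, where by part~(1) it is a matched pair, so $\sum_\Pi a(\Pi)$ equals the number $N_1$ of such pairs; since each of the $8424$ anchor-$\vec{0}$ caps has exactly $36$ type-$1$ partners, $N_1 = 8424\cdot 36/2 = 151{,}632$. On the other hand $a$ is $GL(4,3)$-invariant, and every partition is linearly equivalent to one containing $S$, so $a(\Pi)$ equals $a_1$ (which is $2$ if $\tau_1=1$ and $1$ if $\tau_1=2$) when $\Pi$ is linearly equivalent to a partition in $E_1$, and $a_2$ (which is $1$ if $\tau_2=1$ and $0$ if $\tau_2=2$) when it is linearly equivalent to one in $E_2$. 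If $a_1=a_2$ then $\sum_\Pi a(\Pi)=454{,}896\,a_1$, which cannot equal $151{,}632$; so $a_1\ne a_2$, and an orbit count --- each anchor-$\vec{0}$ cap lies in exactly $36$ partitions equivalent to those in $E_1$, so there are $8424\cdot 36/4 = 75{,}816$ of them and $454{,}896-75{,}816 = 379{,}080$ of the other kind --- gives $75{,}816\,a_1 + 379{,}080\,a_2 = 151{,}632$, i.e.\ $a_1+5a_2=2$. The only solution with $a_1\in\{1,2\}$ and $a_2\in\{0,1\}$ is $a_1=2$, $a_2=0$, that is $\tau_1=1$ and $\tau_2=2$, which finishes part~(2).

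\emph{Main obstacle.} Part~(1) is routine once one thinks of the matching. The care in part~(2) is entirely in the bookkeeping: verifying that the type of a pair of caps is genuinely intrinsic (so ``$1$-completable pair'' is unambiguous), that linear equivalence transports all of Proposition~\ref{P:transfptns} from $S$ to an arbitrary anchor-$\vec{0}$ cap, and that the $a$-value of a partition depends only on its $GL(4,3)$-orbit, so that the orbit-size count ($75{,}816$ versus $379{,}080$) is legitimate. If one prefers to avoid that count, the final step $\tau_1=1$, $\tau_2=2$ can instead be settled by exhibiting one partition from each of $E_1$ and $E_2$ and reading off the type of its second matched pair, then appealing to the transitivity in Proposition~\ref{P:transfptns}(d).
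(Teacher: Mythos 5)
Your proof is correct, and part (2) takes a genuinely different route from the paper's. For part (1) the two arguments are essentially identical: the paper also fixes each cap of the partition in turn and invokes the fact that a partition containing a fixed cap has exactly one $1$- or $2$-completable and two $6$-completables (Proposition~\ref{P:transfptns}(b)); your perfect-matching phrasing is just a cleaner way of saying the same thing. For part (2) the paper argues locally: assuming $\{A,B\}$ is a $1$-completable pair, it looks at the six partitions containing $A$ together with the $A$-$6$-completable cap $C$, uses Proposition~\ref{P:transfptns}(e) to see that exactly one of the six partners matched with $C$ can be $C$-$1$-completable, and uses the transformations from Proposition~\ref{P:transfptns}(d) fixing both $A$ and $C$ to show that the five partners $D_2,\dots,D_6$ coming from $E_2$ all have the same type with respect to $C$, forcing $D$ to be the $C$-$1$-completable one. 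You instead run a global double count of type-$1$ pairs against all $454{,}896$ partitions. Your version trades the paper's reliance on part (e) for reliance on the enumerative constants ($8424$ caps, the $36/90/72$ split, $216$ partitions per cap), so neither is more self-contained, but yours has the virtue of pinning down $\tau_1,\tau_2$ by pure arithmetic. One point worth making explicit if you write this up: the orbit sizes $75{,}816$ and $379{,}080$ are only legitimate once you know the $E_1$- and $E_2$-orbits are distinct, which you get from the invariance of $a$ together with $a_1\ne a_2$ --- so the step ruling out $a_1=a_2$ must come first, as you have it; presented in the other order the count would be circular.
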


\begin{proof}
(1)  If $B$ is $A$-1-completable or $A$-2-completable, then if we consider the partition $\{\{\vec{0}\},B,A,C,D\}$ and think of $B$ as the fixed maximal cap, we have a partition that can contain only one 1- or 2-completable cap with respect to $B$, and since $A$ is 1- or 2-completable with respect to $B$, we must have that $C$ and $D$ are both 6-completable with respect to $B$, so $B$ is also 6-completable with respect to $C$ and $D$.  Since both $A$ and $B$ are 6-completable with respect to $C$ and $D$,  then considering the partition as fixing $C$, it must also be true that $C$ and $D$ are 1- or 2-completable with respect to each other.  

(2) Given the partition $\{\{\vec{0}\},A,B,C,D\}$,  assume that $\{A,B\}$ is a 1-completable pair.  Then $C$ is 6-completable with respect to $A$.  Let $\Pi_i = \{\{\vec{0}\}, A, B_i, C, D_i\}$, $2\leq i \leq6$ be the five additional partitions containing $A$ and $C$.  Then $B$ is the  unique 1-completable with respect to $A$ among those partitions (by Proposition \ref{P:transfptns}(e)), so WLOG,  $\{A,B_2\},\dots, \{A,B_6\}$ are 2-completable pairs.

By Proposition \ref{P:transfptns}(d), there are linear transformations $T_i$ fixing $A$ and sending $\Pi_2$ to $\Pi_i$, $i = 3, \dots, 6$ that fix $C$ as well.  Shifting our point of view so that we are thinking of these partitions as fixing $C$, by Proposition \ref{P:transfptns}(a) and (e), the existence of the transformations $T_i$ imply that  $D_2, \dots, D_6$ must  be 2-completable with respect to $C$, so $\{C,D\}$ is a 1-completable pair.  This means that the pairing of caps in any partition must have either two 1-completable pairs or two 2-completable pairs.  

\end{proof}

We can now put these results together to give the equivalence classes of partitions of $AG(4,3)$ with an arbitrary anchor point.  The affine group acting on the elements of $AG(4,3)$ is $ \mathit{Aff}(4,\mathbb{F}_3) \cong  GL(4,3)   \ltimes AG(4,3)$.  This action sends caps to caps, so it also sends partitions to partitions.  Thus, we can extend the structures we've  found to all possible partitions of $AG(4,3)$.  

\begin{Thm}
The partitions of $AG(4,3)$ into 4 mutually disjoint maximal caps and the associated anchor point $\vec{a}$ are in two equivalence classes under the action of the affine group $\mathit{Aff}(4,3)$.  One equivalence class consists of partitions with two 1-completable pairs, and the other consists of partitions with two 2-completable pairs.  
\end{Thm}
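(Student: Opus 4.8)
The plan is to reduce the statement to the structure theory already developed for partitions that share a fixed cap, and then to upgrade that to full transitivity by a change-of-base-cap argument.

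\emph{Reduction to anchor $\vec 0$.} I would first note that in any partition of $AG(4,3)$ into four mutually disjoint maximal caps together with a leftover point, the four caps necessarily share a common anchor, which is the leftover point: by the Proposition that two maximal caps with different anchor points must intersect, disjointness forces a common anchor $\vec a$, and since the coordinates of a maximal cap sum to $-\vec a \bmod 3$, the $80$ covered points omit precisely $\vec a$. Translation by $-\vec a$ is an element of $\mathit{Aff}(4,3)$ sending this partition to one with anchor $\vec 0$, carrying the ``partition together with its anchor'' to the corresponding object with anchor $\vec 0$. Since the stabilizer of $\vec 0$ in $\mathit{Aff}(4,3)$ is $GL(4,3)$, it suffices to prove that the partitions with anchor $\vec 0$ form exactly two $GL(4,3)$-orbits, one consisting of partitions with two $1$-completable pairs and the other of partitions with two $2$-completable pairs.

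\emph{At least two orbits.} By Lemma \ref{L:pairs}, every partition $\{\{\vec 0\},A,B,C,D\}$ splits canonically into two pairs — the two caps $6$-completable with respect to a third cap of the partition form one pair, the remaining two the other — and the two pairs are either both $1$-completable or both $2$-completable; call the partition \emph{type 1} or \emph{type 2} accordingly. Using Lemma \ref{L:pairs}, one checks that $E_1$ (resp.\ $E_2$) of Proposition \ref{P:transfptns} is exactly the set of type-1 (resp.\ type-2) partitions containing the reference cap $S$, so both types occur. Any element of $GL(4,3)$ (indeed of $\mathit{Aff}(4,3)$) preserves intersections and hence the notion of $i$-completability and the count of partitions containing a given pair, so the type is an affine invariant; thus type-1 and type-2 partitions lie in distinct orbits.

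\emph{At most two orbits.} It remains to show $GL(4,3)$ is transitive on the type-1 partitions with anchor $\vec 0$, the type-2 case being identical. Let $\Pi,\Pi'$ be two such partitions, pick caps $A\in\Pi$ and $A'\in\Pi'$, and — using the Lemma that all maximal caps with anchor $\vec 0$ are linearly equivalent — choose $U,U'\in GL(4,3)$ with $U(A)=S=U'(A')$. Then $U(\Pi)$ and $U'(\Pi')$ are type-1 partitions with anchor $\vec 0$ containing $S$, i.e.\ elements of $E_1$, so by Proposition \ref{P:transfptns}(d) there is $V$ in the stabilizer $\mathcal T$ of $S$ with $V(U(\Pi))=U'(\Pi')$; then $W=U'^{-1}VU\in GL(4,3)$ satisfies $W(\Pi)=\Pi'$. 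Combined with the reduction above, any two partitions of $AG(4,3)$ of the same type are affinely equivalent, so together with the previous paragraph there are exactly two equivalence classes.

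\emph{Where the difficulty lies.} All the combinatorial heavy lifting has already been done in Proposition \ref{P:transfptns} and Lemma \ref{L:pairs}; the only step that needs care is the change of base cap, where one must ensure that the $E_1/E_2$ dichotomy and the transitivity of $\mathcal T$ — stated relative to the single cap $S$ — transport correctly along an arbitrary $U\in GL(4,3)$. This is immediate once one observes that $U\mathcal T U^{-1}$ is the stabilizer of $U(S)$ and that $U$ carries type-$i$ partitions through $S$ to type-$i$ partitions through $U(S)$, the type being intrinsic to the partition and not tied to $S$.
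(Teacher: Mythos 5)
Your proposal is correct and takes essentially the same route as the paper: the dichotomy of types comes from Lemma \ref{L:pairs}, its invariance under affine maps from (an extension of) Proposition \ref{P:transfptns}(a), and transitivity within each type from Proposition \ref{P:transfptns}(c)--(d) together with transitivity on anchor points and on caps with anchor $\vec{0}$. You are in fact more explicit than the paper's printed proof, which only spells out that 1-completable pairs map to 1-completable pairs and leaves the translation-to-$\vec{0}$ and change-of-base-cap reductions implicit in the surrounding discussion.
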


\begin{proof}
From Lemma \ref{L:pairs},  all partitions consist of two 1-completable pairs or two 2-completable pairs.  Extending Theorem  \ref{P:transfptns}(a), if   we see that if $\{\{\vec{0}\},A,B,C,D\}$ and $\{\{\vec{a}\},A',B',C',D'\}$ are two partitions and $\vec{a}\cdot T$ is an element of $\mathit{Aff}(4,3)$, taking $A$ to $A'$, etc.,  then $B$ is 1-completable with respect to $A$ if and only if $B'$ is 1-completable with respect to $A'$.  So, 1-completable pairs must go to 1-completable pairs, and 2-completable pairs must go to 2-completable pairs.  Thus, one equivalence class under the action of $\mathit{Aff}(4,3)$ is the set of partitions containing two 1-completable pairs; the other is the set of partitions containing two 2-completable pairs.
\end{proof}


\section{Subgroups of the affine group acting on partitions}\label{S:groups}

The full automorphism group of $AG(4,3)$ is the group of affine transformations, the affine group $\mathit{Aff}(4,3) = GL(4,3) \ltimes \mathbb{Z}_3^4$.  This group is of order 1,965,150,720.  Let  $\vec{0}=(0,0,0,0)$ in $AG(4,3)$ and consider the stabilizer of $\vec{0}$, $GL(4,3)$, of order 24,261,120.  Since $\mathit{Aff}(4,3)$ is 2-transitive on points in $AG(4,3)$, $GL(4,3)$ is transitive  on points.   $GL(4,3)$ is transitive on caps with anchor $\vec{0}$, but not 2-transitive on caps with anchor $\vec{0}$: while we can send any cap $C$ with anchor $\vec{0}$  to another cap $C'$ with anchor $\vec{0}$,  we can only send $C$-1 completable (respectively $C$-2-completable, $C$-6-completable) caps  to $C'$-1 completable (respectively $C'$-2-completable, $C'$-6-completable) caps, and that action extends to the action of $\mathit{Aff}(4,3)$ on all caps.  Thus,  without loss of generality, we can understand the full group action by  considering the stabilizer $G$ of  one particular maximal cap $S$ as a set (so $G$ also necessarily stabilizes $\vec{0}$), a subgroup of  size 2880.  

The results in this section were found using {\em Mathematica} \cite{Mica} to compute with matrices and  GAP \cite{gap} to analyze the structure of the groups of matrices.  Recall, $E_1$ is the set of partitions containing a particular cap $S$ (with anchor point $\vec{0}$), an $S$-1-completable cap and a second 1-completable pair (where both both caps in that pair are $S$-6-completable); $E_2$ is the set of partitions containing $S$, an $S$-2-completable cap and another 2-completable pair (where both both caps in that pair are $S$-6-completable).

$G$ is transitive on the partitions in $E_1$ and transitive on the partitions in $E_2$.  The subgroup $G_1$ of transformations of determinant 1 is transitive on the partitions in $E_1$ but not transitive on the partitions of $E_2$.  If $\Pi_2$ is a partition in $E_2$, then $\{T(\Pi_2)\}$, $T \in G_1$, is half of the partitions of $E_2$, and each $S$-2-completable cap appears exactly once in that set.  This means that each 2-completable also appears exactly once in $\{T(P_2)\}$, $T \in G -G_1$.

Let $\Pi_1$ be a partition in $E_1$ and let $S_1$ be the $S$-1-completable cap in $\Pi_1$. There is a subgroup $H$  of $G$ of size 40 stabilizing the individual caps of $\Pi_1$ as sets.  These transformations are all of determinant 1. $H$ is nonabelian and has a unique subgroup isomorphic to $\mathbb{Z}_{20}$ and so is isomorphic to $\mathbb{Z}_{20}\rtimes \mathbb{Z}_2$.  There are also $40$ transformations that stabilize $S$ and $S_1$ as sets and switch the two $S$-$6$-completables in the decomposition; these are all of determinant 2.  Thus there is a group of order $80$ stabilizing $S$ and $S_1$ as sets. 

 Let $\Pi_2 $ be a partition in $E_2$ and let $S_2$ be the $S$-2-completable cap in $\Pi_2$.   There is a subgroup $K$  of $G$ of size 8 fixing the individual caps of $\Pi_2$ as sets; these transformations are all of determinant 1. $K$ is isomorphic to $\mathbb{Z}_{4}\times \mathbb{Z}_2$.  There are also 8 transformations that stabilize $S$ and $S_2$ as sets and switch the two $S$-$6$-completables in the decomposition; these are also all of determinant 1. This group of order 16 fixing $\Pi_2$ as a collection of caps is is isomorphic to $\mathbb{Z}_{4}\rtimes \mathbb{Z}_4$. There is another set of $16$ linear transformations that stabilize $S$ and $S_2$, but which send  the two $S$-$6$-completables in $D_2$ to the other 2-completable pair that appears in a partition with $S$ and $S_2$.   These transformations all have determinant 2. The group of order $32$ stabilizing $S$ and $S_2$ is isomorphic to $(\mathbb{Z}_8 \times \mathbb{Z}_2) \rtimes \mathbb{Z}_2$.

Let $S_6$ be  $S$-$6$-completable.  Then exactly one of the partitions containing $S$ and $S_6$ has an $S$-1-completable cap, from Proposition \ref{P:transfptns}(e).   The subgroup of $G$ fixing $S$ and $S_6$ is the same subgroup of order $40$ that fixes $S$ and the unique $S$-1-completable associated with $S_6$.

Finally, $G$ has 144 elements of order 5, so there are $36$ distinct subgroups isomorphic to $\mathbb{Z}_5$.  
 Each of these subgroups fixes a unique element of $E_1$.  Three elements of order 5  generate the subgroup containing all the elements of order 5, which is isomorphic to $A_6$.
 
 \medskip
How these subgroups permute the partitions and the 1- and 2-completable pairs could prove instructive in understanding the geometric structure of the partitions.  

\vspace{1in}

\end{document}